\documentclass[11pt]{article}

\usepackage{graphicx,amsmath,amssymb,amsthm,amsfonts,mathrsfs}
\usepackage[utf8]{inputenc}
\usepackage[mathscr]{euscript}
\usepackage{hyperref}
\usepackage{comment}
\usepackage{xcolor}
\usepackage{float}
\usepackage{epstopdf}
\usepackage{geometry}
\geometry{a4paper,inner=3.3cm, outer=3.5cm,top=3.3cm,bottom=4cm}

\newcommand{\Set}[1]{\left\{#1\right\}}










\newcommand{\Ran}{R}
\newcommand{\Ker}{Ker}

\newcommand{\tm}[1]{\mathrm{#1}}

\newcommand{\skpln}{\vspace*{\baselineskip}}

\newcommand{\bce}{\begin{center}}
\newcommand{\ece}{\end{center}}


\newcommand{\kl}[1]{\left(#1\right)}

\newcommand{\skl}[1]{\left\{ #1 \right\}}
\newcommand{\abs}[1]{\left\vert#1\right\vert}
\newcommand{\inner}[1]{\left\langle#1\right\rangle}
\newcommand{\norm}[1]{\left\| #1 \right\|}



\newcommand{\nalp}{\rho}  
\newcommand{\alp}{\alpha}
\newcommand{\gm}{\gamma}

\newcommand{\dlt}{\delta}

\newcommand{\tht}{\theta}

\newcommand{\kp}{\kappa}

\newcommand{\lm}{\lambda}

\newcommand{\vph}{\varphi}



\newcommand{\R}{{\mathbb R}}



\newcommand{\GGm}{\mathbf{G}}

\newcommand{\cv}{\mathbf{c}}

\newcommand{\pv}{\mathbf{p}}


%



\DeclareMathOperator*{\amin}{argmin}




\numberwithin{equation}{section}

\theoremstyle{plain}
\newtheorem{thm}{Theorem}
\newtheorem{lem}{Lemma}

\newtheorem{corl}{Corollary}
\theoremstyle{definition}

\theoremstyle{remark}
\newtheorem{remk}{Remark}
\newtheorem{assumption}{Assumption}

\makeatletter
\def\moverlay{\mathpalette\mov@rlay}
\def\mov@rlay#1#2{\leavevmode\vtop{%
   \baselineskip\z@skip \lineskiplimit-\maxdimen
   \ialign{\hfil$\m@th#1##$\hfil\cr#2\crcr}}}
\newcommand{\charfusion}[3][\mathord]{
    #1{\ifx#1\mathop\vphantom{#2}\fi
        \mathpalette\mov@rlay{#2\cr#3}
      }
    \ifx#1\mathop\expandafter\displaylimits\fi}
\makeatother


\newcommand*{\rom}[1]{\expandafter\@slowromancap\romannumeral #1@} 
\renewcommand{\phi}{\varphi} 
\newcommand{\E}{\mathrm{E}} 
\newcommand{\Var}{\mathrm{Var}} 
\newcommand{\PP}{\mathrm{P}} 
 

\newcommand{\nooutput}[1]{}
\newcommand{\1}{1\!\!\!\!\!\;{\rm I}}

\newcommand{\cF}{{\mathcal F}}

\newcommand{\vf}{\varphi}
\newcommand{\vk}{\varkappa}
\newcommand{\ve}{\varepsilon}
\renewcommand{\lg}{\langle}
\newcommand{\rg}{\rangle}

\newcommand{\const}{\mathop{\rm const}}

\newcommand{\be}{\begin{equation}}
\newcommand{\ee}{\end{equation}}


%

\title{The quasi-optimality criterion in the linear functional strategy}

\makeatletter
\let\@fnsymbol\@arabic
\makeatother

\author{
Stefan Kindermann\footnote{
Industrial Mathematics Institute,
Johannes Kepler University of Linz,
Altenbergerstrasse 69,
A-4040 Linz, Austria;
{\tt kindermann@indmath.uni-linz.ac.at}
}
\and Sergiy Pereverzyev Jr.\footnote{
Department of Mathematics, University of Innsbruck, Technikerstrasse 13, A-6020 Innsbruck, Austria;
{\tt sergiy.pereverzyev@uibk.ac.at}
}
\and Andrey Pilipenko\footnote{
Institute of Mathematics,  National Academy of Sciences of Ukraine, Tereshchenkivska str. 3, 01601, Kyiv, Ukraine;
Igor Sikorsky Kyiv Polytechnic Institute, Kyiv, Ukraine;
{\tt pilipenko.ay@gmail.com}
}
}

\date{\today}


\begin{document}

\maketitle

\begin{abstract}
The linear functional strategy for the regularization of inverse problems is considered. For selecting
the regularization parameter therein, we propose the heuristic quasi-optimality principle and some 
modifications including the smoothness of  the linear functionals. We prove convergence rates 
for the linear functional strategy with these heuristic rules taking into account the smoothness of 
the solution and the functionals and imposing a structural condition on the noise. Furthermore, 
we study these noise conditions in both a  deterministic and stochastic setup
and verify that for mildly-ill-posed problems and Gaussian noise, these conditions are satisfied 
almost surely, where on the contrary, in the severely-ill-posed case and in a similar setup,  the corresponding noise 
condition fails to hold.  Moreover, we propose an  aggregation method for adaptively optimizing 
the parameter  choice rule by  making  use of improved rates for linear functionals.  
Numerical results indicate that this method yields better results than the standard heuristic rule. 
\end{abstract}

\skpln

{\bf Keywords:} regularization, linear functional strategy, 
heuristic parameter choice rules, quasi-optimality rule, aggregation  

\skpln

{\bf AMS subject classifications:} 65J20, 47A52, 65F22


\section{Introduction}


The estimation of linear bounded functionals of an unknown element $x$ from an indirect noisy observation
$y^{\dlt}$ given as
\begin{equation}\label{n1}
    y^{\dlt} = Tx + \dlt \xi
\end{equation}
is one of the classical problems in regularization theory~\cite{And80}. 
Here, we assume that $T$ is a linear,  injective, not necessarily boundedly invertible operator
 from a solution Hilbert space $X$ into an observation Hilbert space $Y$, $\xi$ is an additive
noise process, and $\dlt$ is its intensity, or noise level, such that for $y=Tx$, it holds
$\norm{  y - y^{\dlt}  } \leq \dlt$, $\dlt\in(0,1)$. We use the same symbols $\inner{ \cdot,\cdot }$, 
$\norm{ \cdot }$
for the inner products and the corresponding norms in both $X$ and~$Y$.


It is known that the problem of estimating the value $f(x) = \langle f, x\rangle $ of a linear bounded functional $f\in X$
from~\eqref{n1} is less ill-posed than the problem of estimating $x$, in the sense that the value
$f(x) $ allows for a more accurate reconstruction than the element $x$ in the
$X$-norm~\cite{EngNeu88,AndEng91,LuPer13}.
A regularization of the first-named problem is usually performed by the so-called linear functional
strategy~\cite{And86} that is also closely related to the mollifier
methods~\cite{LouMaa90}. In  case of a known noise intensity $\dlt$, the choice of
the regularization parameters in the linear functional strategy has been extensively studied
(see, e.g., \cite{GolPer00,MatPer02,LuPer13} and references therein).

At the same time, in some applications, such as satellite gravity gradiometry, one cannot expect to have
good knowledge of the noise model in general and of the noise intensity $\dlt$ in particular
(see, e.g., discussions in~\cite{KusKle02,BauMatPer07}). As a remedy for this, regularization theory
has an arsenal of so-called heuristic parameter choice strategies that do not require  knowledge
of the noise intensity and therefore can be used in the above mentioned applications.
The quasi-optimality criterion~\cite{TikGla65} is one of the  simplest and the oldest but still quite efficient instance
among such strategies.


Of course, in the {\em worst case} scenario, where the noise $\xi$ in~\eqref{n1} is assumed to be chosen 
by some antagonistic
opponent only subject to the constraint $\norm{ \xi } \leq 1$, the quasi-optimality criterion, 
as well as any other heuristic parameter
choice strategy, cannot guarantee convergence of the corresponding regularized approximants because of the so-called
Bakushinskii veto~\cite{Bak84}. On the other hand, it has been shown~\cite{BauRei08,Bec11} that for 
the quasi-optimality
criterion, the Bakushinskii veto can be avoided if the regularization performance is measured {\em on average} over
realizations of $\xi$.

At the same time, another way to overcome the Bakushinskii veto has been proposed in~\cite{KinNeu08,Neu08}, 
where convergence of the regularized approximants to $x$ in the solution space norm and its rates have been
established under a qualitative restriction on the noise $\xi$ (a noise condition of Muckenhoupt type). 
Our intention in this paper
is to extend this {\em restricted noise} approach in~\cite{KinNeu08,Neu08} to the context of the linear functional strategy.
We also show that for a wide class of moderately ill-posed problems~\eqref{n1} and
for random noise $\xi$ with bounded moments, the above mentioned Muckenhoupt-type 
condition is satisfied almost
surely.

The case of severely ill-posed problems is considered as well. Note that in this case, the theoretical bounds
on the convergence rates of the regularized approximants selected by the quasi-optimality criterion in the solution
space norm are  worse than those for the noise level-dependent parameter choice strategies.
At the same time, as follows from our results, in the linear functional strategy, the above-mentioned convergence
rate gap can be essentially reduced. This hints at an opportunity to use the linear functional strategy equipped
with the quasi-optimality criterion for aggregating the constructed regularized approximants in a way described
in~\cite{ChePerXu15}. Then from~\cite{ChePerXu15}, it follows that such aggregation by the linear functional
strategy can improve the accuracy compared to the aggregated regularized approximations, and this
can be seen as a way to use the quasi-optimality criterion for mildly and severely ill-posed problems.


Note that a practical implementation of the quasi-optimality criterion depends on the so-called
differential quadrature~\cite{BelKasCas72}
\begin{equation}\label{n2}
  \left.
  \frac{ \partial  x_{\alp}^{\dlt}  }{ \partial  \alp }
  \right| _{ \alp = \alp_i }    \approx
  \sum\limits_{ j }  a_{ij}  x_{\alp_{j}  }^{\dlt}
\end{equation}
that is used to approximate the partial derivative $\frac{ \partial  x_{\alp}^{\dlt}  }{ \partial  \alp }$
of the regularized solution $x_{\alp}^{\dlt}$ of~\eqref{n1}, which is based on a current value
of the regularization parameter $\alp = \alp_i$.
Starting from the original paper~\cite{TikGla65}, one usually uses a simple backward difference formula,
where $ a_{ij} = 0 $ for $j\neq i,i-1$, and $a_{ i,i } = - a_{ i,i-1 } = \kl{ \alp_{i} - \alp_{i-1} }^{-1}  $.

On the other hand, as it is mentioned in~\cite{BelKasCas72}, there are many ways of determining the
coefficients $a_{ij}$ in~\eqref{n2}. For example, in the backward difference formula, one can introduce correction
factors such that
$$
  a_{ i,i } =  c_{i}  \kl{ \alp_{i} - \alp_{i-1} }^{-1},\quad
  a_{ i,i-1 } = - c_{i-1} \kl{ \alp_{i} - \alp_{i-1} }^{-1},
$$
where $c_{\ell}$, $\ell = i,i-1$, approximates the values $c_{\ell}^{*}$ minimizing the error
$ \norm{  x - c_{\ell}^{*}  x_{\alp_{ \ell } }^{\dlt}  }  =   \min\limits_{c}  \norm{  x - c  x_{\alp_{ \ell } }^{\dlt}  }  $.
It is clear that
$  c_{\ell}^{*} = \inner{   x,x_{\alp_{ \ell }  }^{\dlt}   }  /  \norm{  x_{\alp_{ \ell }  }^{\dlt}  }^2   $,
and $  \inner{   x,x_{\alp_{ \ell }  }^{\dlt}   }  $ is the value of the linear bounded functional
$x_{\alp_{ \ell }  }^{\dlt} \in X $
at the unknown solution that can be approximated by
$  \inner{   x_{\alp_{ j }  }^{\dlt},     x_{\alp_{ \ell }  }^{\dlt}   }  $,
where $\alp_j$ is chosen by the quasi-optimality criterion.


The use of the backward difference formula corrected as above can be seen as an iterated quasi-optimality rule.
We will demonstrate in Section~\ref{section:numerical} that such a combination of the 
linear functional strategy---by an aggregation approach--- and the quasi-optimality criterion
can also improve the regularization performance as compared to the standard quasi-optimality.

The paper is organized as follows. In the next section, we present the problem setup and formulate main results.
The proofs are given in Section~\ref{section:proof_main}. In Section~\ref{section:sufficient}, 
we describe random processes and investigate whether they  almost surely
meet the Muckenhoupt-type conditions. 
In Section~\ref{section:numerical}, we discuss a combination of the aggregation by means of
the linear functional strategy with the quasi-optimality criterion and present numerical experiments.



\section{The main convergence rates results}\label{sec:two}
In this section, we formulate the main results. Let us introduce some standard notation. 
Let $X,Y$ be Hilbert spaces, $T:X\to Y$ be a continuous linear operator such that $\Ker(T)=\{0\}$, $\Ker(T^*)=\{0\}.$
Here, the assumptions of injectivity of $T$ and $T^*$ are only imposed for simplicity; the main results hold 
with modifications in the general case as well.  We denote by $E_\lambda$ and $F_\lambda$ the 
spectral families for the operators $T^*T$ and $T T^*$, respectively.  The notion 
$\Ran(T)$ stands for the range and $\Ker(T)$ for the nullspace of the operator~$T$.  
For $f,g$ being functions or sequences, the notation $f \asymp g$ indicates that some constants
$c_1,c_2$ exist such that $c_1 f \leq g \leq c_2 f$ for all arguments or sequence indices, where 
the constants in particular do not depend on $\delta$.

Consider an ill-posed problem in the form $Tx=y$. Suppose that we observe $y^\delta\in Y$ such
that $\|y^\delta-y\|\leq \delta.$
We introduce  regularized solutions obtained by a general spectral filter function $g_\alpha$:
\[
x_\alpha=g_\alpha(T^*T)T^*y,\ \ x_\alpha^\delta=g_\alpha(T^*T)T^*y^\delta.
\]
Moreover, let $f\in X^*=X$ be a linear functional.

One aim of this paper is to obtain upper bounds for the error of  linear functionals of the solutions, i.e., 
for the quantity $\lg f, x_{\alpha(y^\delta)}^\delta-x\rg$, where a parameter 
$\alpha(y^\delta)$ is selected in a special way and depends only on the observation $y^\delta.$
To state a smoothness/source condition for $x$ and/or $f$, we use 
$\vf$ and $\vk$, which are  continuous, non-negative, increasing  real functions defined for positive real values
(so-called index functions).
Below we impose some standard assumptions on $\vf,\vk,g_\alpha$.

Convergence rates estimates for the error $x_\alpha-x$ using some smoothness conditions on $x$ 
are nowadays a classical topic. 
For instance, if $\delta$ is known, see, for example, \cite{LuPer13}, then under some natural conditions
the best accuracy that can be guaranteed under the smoothness condition
$x\in \Ran(\vf(T^*T))$ is of the order $ \vf(\theta^{-1}(\delta))$, 
where $\theta(t)=\vf(t)\sqrt{t}$ and $\theta^{-1}$
is its inverse function. For linear functionals, the situation can be improved: 
Assume that $x\in \Ran(\vf(T^*T)),$  and $ f\in \Ran(\vk(T^*T))$, where 
$\vf, \vk$ are index functions, 
then the best accuracy for the linear functionals $\lg f, x_{\alpha(y^\delta)}^\delta-x\rg$
is of the order $(\vk \vf )(\theta^{-1}(\delta))$.

If the noise intensity is known, then the best order in accuracy can usually be achieved by standard 
means of selecting $\alpha$. 
However, if $\delta$ is not known, the choice of the optimal $\alpha$ is a serious problem.
 For  $\alpha(y^\delta)$   selected 
  according to the quasi-optimality principle, some upper  bounds for $\|x_{\alpha(y^\delta)}^\delta-x\|$ 
  were obtained in~\cite{KinNeu08, Neu08}. 
There it is proved that if $\vf(t)=t^\mu$ and if the qualification $\mu_0$ of the regularization 
$g_\alpha$ is such that $\mu_0\geq \mu$, then 
\[
\|x_{\alpha(y^\delta)}^\delta-x\|=O(\delta^{\frac{2\mu}{2\mu+1}\frac{\mu}{\mu_0}}), \ \delta\to0.
\]
The main assumption  on the noise was the following condition of Muckenhoupt type (noise condition):
\be\label{eq:main_assumption}
\exists C>0\ \forall \delta>0\ \forall \alpha>0 \qquad 
\alpha^2\int_\alpha^\infty \lambda^{-1} d\| F_\lambda (y^\delta-y)\|^2 \leq C\int_0^\alpha\lambda   
d\| F_\lambda (y^\delta-y)\|^2.
\ee

We give some sufficient  conditions that ensure \eqref{eq:main_assumption} in 
Section~\ref{section:sufficient}.
In this paper we  consider \eqref{eq:main_assumption} and its generalization for  
the linear functional strategy. 
We discuss these conditions 
in the deterministic and random case; in particular we verify that for mildly ill-posed problems 
and Gaussian noise, it is satisfied almost surely.  Moreover, we provide 
upper bounds for $\lg f, x_{\alpha(y^\delta)}^\delta-x\rg$, where $\alpha(y^\delta)$ is selected 
by the quasi-optimality principle as in 
\cite{KinNeu08, Neu08}, and  we also obtain some generalization of the upper bounds there.
Furthermore, we prove improved bounds $\lg f, x_{\alp_{\vk}(y^\delta)}^\delta-x\rg$, 
when $\alpha_\vk(y^\delta)$ is selected heuristically but using  information about $y^\delta$
and also  $\vk$.


For later use we introduce the quasi-optimality functional and a variant suited for functionals: 
\begin{align*}
\psi^2(\alpha,y^\delta)&=\int_0^\infty (1-\lambda g_\alpha(\lambda))^2 \lambda g_\alpha^2(\lambda) 
d \|F_\lambda y^\delta\|^2=
\|(I-T^*Tg_\alpha(T^*T))x_\alpha^\delta\|^2, \\
\psi_\vk^2(\alpha,y^\delta)&=\int_0^\infty \vk^2(\lambda)(1-\lambda g_\alpha(\lambda))^2 \lambda
g_\alpha^2(\lambda) d \|F_\lambda y^\delta\|^2\\
&=\|\vk(T^*T)(I-T^*Tg_\alpha(T^*T))x_\alpha^\delta\|^2.
\end{align*}
We introduce the following minimization-based heuristic parameter choice rules; the first one 
is the classical quasi-optimality rule as in \cite{KinNeu08, Neu08} while the second one 
is our modification:
\begin{equation} \alpha(y^\delta)=\textrm{argmin}_\alpha\psi(\alpha,y^\delta), \qquad 
\alpha_\vk(y^\delta)=\textrm{argmin}_\alpha\psi_\vk(\alpha,y^\delta).
\end{equation}

It is clear that $\alpha(y^\delta)$ can be computed without knowledge of $\delta$, which is 
the defining feature of heuristic parameter choice rules. The novel modified rule 
$\alpha_\vk(y^\delta)$ additionally needs knowledge of the functional smoothness (via $\vk)$). 
It will be shown that this additional information leads to improvements in the error bounds. 

At first, we state some standard assumptions:
\begin{assumption}\label{assone} \ 

\begin{enumerate}
\item For all $\alpha>0$ we have
\be\label{eq:assumption_g}
0\leq \lambda g_\alpha(\lambda) \leq 1,\lambda>0, \qquad \sup_{\lambda>0}  
\sqrt{\lambda}  g_\alpha(\lambda)\leq \frac{c_1}{\sqrt{\alpha }}.
\ee
\item For all $  \alpha>0$
and $\lambda\in(0,\alpha)$
\be\label{eq:assumption_g_2}
 (1-\lambda g_\alpha(\lambda))\geq c_2, \qquad  \frac{c_3}{ \alpha}\leq g_\alpha(\lambda) \leq \frac{c_4}{\alpha}.
  \ee

  \item For any $\lambda>0$, 
\be\label{eq:assumption_g_3}
k(\lambda):=\inf_{\alpha\in(0,\|T\|]}\frac{(1-\lambda g_\alpha(\lambda))g_\alpha(\lambda)}{\alpha} >0. 
\ee
%
%
%
\item The qualification of $g_\alpha$ covers $\vf  $ and $\vf\vk$, i.e., for all $\alpha>0$
\be\label{eq:assumption_qualification}
 \sup_{\lambda>0} |\vf(\lambda)(1-\lambda g_\alpha(\lambda))|\leq c_5  \vf(\alpha),
\ee
\be\label{eq:assumption_qualification_2} 
 \sup_{\lambda>0} |\vk(\lambda)\vf(\lambda)(1-\lambda g_\alpha(\lambda))|\leq c_6 \vk(\alpha)\vf(\alpha).
\ee
\item The function $\vk$ is covered by the qualification $1/2$, i.e.,  for all $\alpha>0$
\be\label{eq:299}
\sup_{\lambda>\alpha}  \vk(\lambda)/\sqrt{\lambda} \leq c_7\vk(\alpha)/\sqrt{\alpha}.
\ee
\item  The function $\vk,\vf$ are regularly varying: For all  $c_8>0$  there exists $c_9>0$ and $\delta_0>0$ such that 
\be\label{regvar} \vf(c_8 \delta) \leq c_9\vf(\delta)  \quad \mbox{ and  } \quad  
\vk(c_8 \delta) \leq c_9\vk(\delta)  \qquad  \forall \delta\in (0,\delta_0). 
\ee
\end{enumerate}
\end{assumption}

We note that in several places, condition \eqref{eq:assumption_g_3} could be replaced by 
one with a more general qualification, i.e., that 
there exists $\mu_0>0$   such that for   any $\lambda>0$ 
\be\label{eq:assumption_g_3_1}
k(\lambda):=\inf_{\alpha\in(0,\|T\|]}\frac{(1-\lambda g_\alpha(\lambda))g_\alpha(\lambda)}{\alpha^{\mu_0}} >0.
\ee

Additionally to the structural conditions on the filter and index functions, 
we impose the following  generalization of the noise condition \eqref{eq:main_assumption}:
\begin{equation}\label{eq:main_assumption_kappa}
\begin{split}
&\exists \delta_0>0:  \forall \delta\in (0, \delta_0) \forall \alpha>0: \\
& \alpha^2\int_\alpha^\infty \lambda^{-1} \vk^2(\lambda)d\| F_\lambda (y^\delta-y)\|^2 \leq 
c_{10}\int_0^\alpha\lambda   \vk^2(\lambda)d\| F_\lambda (y^\delta-y)\|^2.
\end{split} 
\end{equation}

We state the main convergence result of the paper. In the sequel we denote by 
$\vee$ the maximum. 
\begin{thm}\label{thm:main}
Suppose that $y\neq 0, x\in \Ran(\vf(T^*T)), f\in \Ran(\vk(T^*T)),$  where $\vf, \vk$ are continuous, 
non-negative, increasing  functions, the function 
$(0,\infty)^2\ni(\lambda,\alpha)\to g_\alpha(\lambda)$ is continuous, 
and there are constants $c_1,\ldots,c_9>0$ such that Assumptions~\ref{assone} hold. 
Moreover, let the noise condition \eqref{eq:main_assumption_kappa} hold. 

Then, as $\delta\to0$,
\begin{align}
 \label{eq_main_estimate0}
 |x_{\alpha(y^\delta)}^\delta-x| &=O\left( \vf(\vf(\theta^{-1}(\delta)) \vee  \vf(\theta^{-1} (\delta ))\right), \\
\begin{split}
  |\lg f, x_{\alpha(y^\delta)}^\delta-x\rg| &=
  O\left(\vk(\vf(\theta^{-1}(\delta)))\vf(\vf(\theta^{-1}(\delta)) \vee  \vf(\theta^{-1} (\delta ))\right)\\&=
 O\left( (\vk \vf)\circ(\vf(\theta^{-1}(\delta)))\vee  \vf(\theta^{-1} (\delta ))\right);
 \end{split} \label{eq_main_estimate1} \\
\label{eq_main_estimate2}
  |\lg f, x_{\alpha_\vk(y^\delta)}^\delta-x\rg| &=O\left( (\vk \vf)\circ(\vk\vf)(\theta^{-1}(\delta)) 
  \vee  (\vk\vf)(\theta^{-1} (\delta ))\right).
\end{align}
%
%
%
\end{thm}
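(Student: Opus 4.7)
The plan is to adapt the strategy of \cite{KinNeu08} to the linear-functional setting, with the Muckenhoupt-type condition \eqref{eq:main_assumption_kappa} as the essential tool that allows the quasi-optimality functionals $\psi$ and $\psi_\vk$ to control the noise propagation. First, I would use the decomposition
\[
\lg f, x_\alpha^\delta - x\rg = \lg f, x_\alpha - x\rg + \lg f, g_\alpha(T^*T)T^*(y^\delta-y)\rg,
\]
and its norm analogue. The source representations $x = \vf(T^*T)w$, $f = \vk(T^*T)v$ combined with the qualification assumptions \eqref{eq:assumption_qualification}--\eqref{eq:assumption_qualification_2} yield the approximation bounds $\|x_\alpha - x\| \le C\vf(\alpha)$ and $|\lg f, x_\alpha - x\rg| \le C\vk(\alpha)\vf(\alpha)$ directly from spectral calculus.

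The technical heart is the noise-propagation estimate
\[
\|\vk(T^*T) g_\alpha(T^*T) T^*(y^\delta - y)\| \le C \psi_\vk(\alpha, y^\delta - y),\qquad \alpha>0,
\]
together with its unweighted counterpart. I would prove it by splitting $\int_0^\infty \vk^2(\lambda) g_\alpha^2(\lambda) \lambda \, d\|F_\lambda(y^\delta-y)\|^2$ at $\lambda=\alpha$. On $(0,\alpha)$, \eqref{eq:assumption_g_2} gives $(1-\lambda g_\alpha)\ge c_2$, so the missing factor $(1-\lambda g_\alpha(\lambda))^2$ may be inserted at the cost of $c_2^{-2}$, placing that half inside $\psi_\vk^2(\alpha, y^\delta - y)$. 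On $(\alpha,\infty)$, $\lambda g_\alpha(\lambda)\le 1$ gives $g_\alpha^2(\lambda)\lambda\le\lambda^{-1}$, and the Muckenhoupt condition \eqref{eq:main_assumption_kappa} then dominates the resulting tail by $\alpha^{-2}\int_0^\alpha\vk^2(\lambda)\lambda\,d\|F_\lambda(y^\delta-y)\|^2$; the lower bound $g_\alpha(\lambda)\ge c_3/\alpha$ from \eqref{eq:assumption_g_2} places this last integral back inside $\psi_\vk^2(\alpha, y^\delta - y)$.

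Next, I would obtain an upper bound for the quasi-optimality functional at the chosen parameter via comparison with an oracle value. The spectral triangle inequality combined with the qualification gives $\psi_\vk(\alpha, y^\delta) \le C\vk(\alpha)\vf(\alpha) + C\vk(\alpha)\delta/\sqrt\alpha$, where \eqref{eq:299} controls the tail of the noise piece. At $\alpha = \theta^{-1}(\delta)$, the right-hand side is $O((\vk\vf)(\theta^{-1}(\delta)))$; by the minimization defining $\alpha_\vk(y^\delta)$, the same bound survives at the minimizer. The $\vk\equiv 1$ analogue yields $\psi(\alpha(y^\delta), y^\delta) = O(\vf(\theta^{-1}(\delta)))$. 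Finally, \eqref{eq:assumption_g_3} together with $y\neq 0$ supplies the matching lower bound $\psi(\alpha, y^\delta) \ge c\alpha$ for $\delta$ small, from which $\alpha(y^\delta) \le C\vf(\theta^{-1}(\delta))$, and analogously for $\alpha_\vk(y^\delta)$ with $\vk\vf$ replacing $\vf$.

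Assembling at $\alpha=\alpha(y^\delta)$, the identity $\psi(\alpha, y^\delta - y) \le \psi(\alpha,y^\delta) + \psi(\alpha,y)$ together with the previous steps gives $\|x_\alpha^\delta - x\|\le C\vf(\vf(\theta^{-1}(\delta)))+C\vf(\theta^{-1}(\delta))$, which is \eqref{eq_main_estimate0}. For \eqref{eq_main_estimate1}, the weighted noise-propagation bound combined with $\alpha(y^\delta)\le C\vf(\theta^{-1}(\delta))$ produces the $(\vk\vf)\circ\vf\circ\theta^{-1}(\delta)$ term, while applying $|\lg f,z\rg|\le\|f\|\|z\|$ to \eqref{eq_main_estimate0} produces the $\vf\circ\theta^{-1}(\delta)$ term; writing the bound as the $\vee$ covers both regimes. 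The rate \eqref{eq_main_estimate2} follows from the same scheme with $\psi_\vk$ replacing $\psi$ throughout. The main obstacle will be the second paragraph, where the splitting and the Muckenhoupt step must fit together with constants independent of $\delta$ and $\alpha$; a secondary subtlety in \eqref{eq_main_estimate1} is the lack of a direct lower bound on $\alpha(y^\delta)$ to tame $\vk(\alpha)\delta/\sqrt\alpha$, which is precisely why the trivial norm bound is retained in the $\vee$.
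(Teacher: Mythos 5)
Your proposal follows the same architecture as the paper's proof: the identical error decomposition, the spectral-calculus approximation bounds, the Muckenhoupt-based noise-propagation lemma with the split at $\lambda=\alpha$ (this matches the paper's Lemma~\ref{lem:nontrivial_estimate} step for step), the lower bound $\psi(\alpha,y^\delta)\geq c\alpha$ from \eqref{eq:assumption_g_3} and $y\neq0$, and the resulting parameter bounds $\alpha(y^\delta)\leq C\vf(\theta^{-1}(\delta))$ and $\alpha_\vk(y^\delta)\leq C(\vk\vf)(\theta^{-1}(\delta))$. Your direct derivation of the oracle bound $\psi_\vk(\alpha,y^\delta)\leq C\vk(\alpha)\left(\vf(\alpha)+\delta/\sqrt{\alpha}\right)$ via \eqref{eq:299} is a welcome self-contained substitute for the paper's citation of \cite[p.~107]{LuPer13} in Lemma~\ref{lem:estim_psi_kappa}, and your treatment of \eqref{eq_main_estimate0} and \eqref{eq_main_estimate2} is sound.

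There is, however, a genuine gap in your assembly of \eqref{eq_main_estimate1}. After the noise-propagation lemma and the triangle inequality, one must bound $\psi_\vk(\alpha(y^\delta),y^\delta)$ --- the \emph{weighted} functional at the \emph{unweighted} minimizer. The minimization property of $\alpha(y^\delta)$ controls only $\psi$ there, and, as you yourself note, your oracle bound cannot be evaluated at $\alpha(y^\delta)$ because the term $\vk(\alpha)\delta/\sqrt{\alpha}$ is untamed without a lower bound on $\alpha(y^\delta)$. But your fallback --- Cauchy--Schwarz applied to \eqref{eq_main_estimate0} --- does \emph{not} produce the term $\vf(\theta^{-1}(\delta))$; it produces $\vf(\vf(\theta^{-1}(\delta)))\vee\vf(\theta^{-1}(\delta))$, and since $\vk$ is bounded near $0$, the term $(\vk\vf)\circ\vf\circ\theta^{-1}(\delta)$ is dominated by $\vf\circ\vf\circ\theta^{-1}(\delta)$, so your assembled bound for the functional collapses to the order of the norm bound \eqref{eq_main_estimate0}. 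Concretely, in the H\"older setting of Corollary~\ref{corl:1} you would obtain only $O\bigl(\delta^{\frac{2\mu}{2\mu+1}\mu}\bigr)$ instead of the claimed $O\bigl(\delta^{\frac{2\mu}{2\mu+1}(\mu+\gamma)}\bigr)$: the entire $\gamma$-improvement that the theorem asserts is lost. The missing one-line ingredient is the paper's inequality \eqref{eq:psi_kappa}: since $\vk$ is continuous and increasing, it is bounded on the spectrum of $T^*T$, whence pointwise $\psi_\vk(\alpha,y^\delta)\leq \bigl(\sup_{\lambda\in\sigma(T^*T)}\vk(\lambda)\bigr)\,\psi(\alpha,y^\delta)$ for every $\alpha$; evaluated at $\alpha(y^\delta)$ this transfers the minimizer bound and gives $\psi_\vk(\alpha(y^\delta),y^\delta)\leq C\psi(\alpha(y^\delta),y^\delta)\leq C'\vf(\theta^{-1}(\delta))$, which is exactly the pure $\vf(\theta^{-1}(\delta))$ term in the $\vee$ of \eqref{eq_main_estimate1}. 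With this insertion the rest of your argument goes through; you should also state explicitly that monotonicity together with the regular-variation condition \eqref{regvar} is what absorbs the constant in $\alpha(y^\delta)\leq C\vf(\theta^{-1}(\delta))$ when passing to $\vk(\alpha(y^\delta))\vf(\alpha(y^\delta))\leq C(\vk\vf)(\vf(\theta^{-1}(\delta)))$.
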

Observe, that the bound \eqref{eq_main_estimate2} for the modified rule $\alpha_\vk(y^\delta)$
is improved compared to \eqref{eq_main_estimate1}.

\begin{remk}
If we replace \eqref{eq:assumption_g_3} by the more general one, \eqref{eq:assumption_g_3_1}, 
then the convergence rates in this theorem read as 
\begin{align}
|x_{\alpha(y^\delta)}^\delta-x| &=O\left( \vf(\vf^{1/\mu_0}(\theta^{-1}(\delta)) \vee  \vf(\theta^{-1}
(\delta ))\right),\label{eq_main_estimate0_1} \\
\begin{split} |\lg f, x_{\alpha(y^\delta)}^\delta-x\rg| &=O\left(\vk(\vf^{1/\mu_0}(\theta^{-1}
 (\delta)))\vf(\vf(\theta^{-1}(\delta)) \vee  \vf(\theta^{-1} (\delta ))\right)
\\
&=O\left( (\vk \vf)\circ(\vf^{1/\mu_0}(\theta^{-1}(\delta)))\vee  \vf(\theta^{-1} (\delta ))\right),\label{eq_main_estimate1_1}
\end{split}
\\
 |\lg f, x_{\alpha_\vk(y^\delta)}^\delta-x\rg| &=
 O\left( (\vk \vf)\circ(\vk\vf)^{1/\mu_0}(\theta^{-1}(\delta)) \vee  (\vk\vf)(\theta^{-1} 
 (\delta ))\right).\label{eq_main_estimate2_1}
\end{align}
\end{remk}

\begin{remk}
Formula \eqref{eq_main_estimate0} can be deduced using the reasoning of  \cite{KinNeu08, Neu08}
(the authors used concrete  power function in their estimates). It also can be seen from our proof for $\vk(\lambda)\equiv 1.$
To verify  \eqref{eq_main_estimate0}, actually  only \eqref{eq:main_assumption} is required, which 
is implied by  \eqref{eq:main_assumption_kappa} as the following remark indicates.
\end{remk}
\begin{remk}
The main assumption of the Theorem is \eqref{eq:main_assumption_kappa}. It can be considered as 
an analogue of \eqref{eq:main_assumption} from \cite{KinNeu08, Neu08} 
for the  mollified noise $\vk(T^*T)(y^\delta-y)$.
It should be noted, that \eqref{eq:main_assumption_kappa} implies \eqref{eq:main_assumption}. Indeed, 
it follows from the monotonicity of $\vk$
that $\frac{\vk(\lambda)}{\vk(\alpha)}\geq 1$ for $ \lambda\geq \alpha$. So,
\[
\alpha^2\int_\alpha^\infty \lambda^{-1} d\| F_\lambda (y^\delta-y)\|^2 \leq 
\alpha^2\int_\alpha^\infty \lambda^{-1}\frac{\vk^2(\lambda)}{\vk^2(\alpha)} d\| F_\lambda (y^\delta-y)\|^2.
\]
Due to \eqref{eq:main_assumption_kappa} the right hand side of the last inequality is less than or equal to
 \[
c_{10} \int_0^\alpha  \lambda \frac{\vk^2(\lambda)}{\vk^2(\alpha)} d\| F_\lambda (y^\delta-y)\|^2 \leq
 c_{10}\int_0^\alpha\lambda   d\| F_\lambda (y^\delta-y)\|^2,
\]
where we used that $\frac{\vk(\lambda)}{\vk(\alpha)}\leq 1$ for $\lambda\leq \alpha$.
\end{remk}

  For Tikhonov's regularization  $g_\alpha(\lambda)=\frac{1}{\alpha+\lambda},$  assumptions 
  \eqref{eq:assumption_g}, \eqref{eq:assumption_g_2}, and
\eqref{eq:assumption_g_3} are obviously satisfied, 
 assumptions 
\eqref{eq:assumption_qualification}, \eqref{eq:assumption_qualification_2}, and  \eqref{eq:299}
 are valid for $\vf(t)=t^\mu,$ $\vk(t)=t^\gamma$ with $\mu>0, \gamma\in[0,1/2],  \mu+\gamma\leq 1.$ 

For iterated  Tikhonov's regularization  
$g_\alpha(\lambda)=\lambda^{-1}(1-\frac{\alpha^n}{(\alpha+\lambda)^n}),$  
assumptions \eqref{eq:assumption_g}, \eqref{eq:assumption_g_2}, and
\eqref{eq:assumption_g_3_1} are obviously satisfied, where $\mu_0=n$; 
 assumptions 
\eqref{eq:assumption_qualification}, \eqref{eq:assumption_qualification_2}, and  \eqref{eq:299}
 are valid for $\vf(t)=t^\mu,$ \mbox{$\vk(t)=t^\gamma$} with $\mu>0, \gamma\in[0,1/2],  \mu+\gamma\leq \mu_0.$
 
Specializing the previous theorem to Tikhonov regularization and H\"older-type index functions, we find the 
following corollary:
\begin{corl}\label{corl:1}
Let  $g_\alpha(\lambda)=\frac{1}{\alpha+\lambda},$ $\vf(t)=t^\mu, \vk(t)=t^\gamma$ with
$\mu>0, \gamma\in[0,1/2], \mu+\gamma\leq 1.$
Assume that \eqref{eq:main_assumption_kappa} is satisfied. Then as $\delta \to 0$, 
\begin{align*}
 |  x_{\alpha(y^\delta)}^\delta-x| &= 
O\left( \delta^{\frac{2\mu}{2\mu+1}  \mu} \right), \\
 |\lg f, x_{\alpha(y^\delta)}^\delta-x\rg| &= 
O\left( \delta^{\frac{2\mu}{2\mu+1} (\mu+\gamma)}\right),\\
 |\lg f, x_{\alpha_\vk(y^\delta)}^\delta-x\rg| &=O\left( 
\delta^{\frac{2(\mu+\gamma)^2}{2\mu+1}} \right). 
\end{align*}
\end{corl}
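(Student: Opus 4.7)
The plan is to derive this corollary as a direct specialization of Theorem~\ref{thm:main}: first verify Assumption~\ref{assone} for the Tikhonov filter $g_\alpha(\lambda)=1/(\alpha+\lambda)$ together with $\vf(t)=t^\mu$ and $\vk(t)=t^\gamma$, then substitute $\theta^{-1}(\delta)$ into the abstract rates and simplify the resulting maxima.

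The verification of Assumption~\ref{assone} is routine bookkeeping. The bounds \eqref{eq:assumption_g}, \eqref{eq:assumption_g_2}, and \eqref{eq:assumption_g_3} follow at once from $1-\lambda g_\alpha(\lambda)=\alpha/(\alpha+\lambda)$, the elementary inequality $\sqrt{\lambda}/(\alpha+\lambda)\leq 1/(2\sqrt{\alpha})$, and $k(\lambda)\geq 1/(\|T\|+\lambda)^2>0$. The qualification estimates \eqref{eq:assumption_qualification} and \eqref{eq:assumption_qualification_2} reduce to the classical fact $\sup_{\lambda>0}\lambda^p\alpha/(\alpha+\lambda)\leq c_p\alpha^p$ for $p\in[0,1]$, which is guaranteed by the hypotheses $\mu\leq 1$ and $\mu+\gamma\leq 1$. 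Inequality \eqref{eq:299} uses only $\gamma\leq 1/2$, and the regular-variation condition \eqref{regvar} is immediate for power functions.

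Next I would plug in $\theta(t)=t^{\mu+1/2}$, so that $\theta^{-1}(\delta)=\delta^{2/(2\mu+1)}$ and $\vf(\theta^{-1}(\delta))=\delta^{2\mu/(2\mu+1)}$. Substitution into Theorem~\ref{thm:main} then produces $\delta^{2\mu^2/(2\mu+1)}\vee\delta^{2\mu/(2\mu+1)}$ from \eqref{eq_main_estimate0}, $\delta^{2\mu(\mu+\gamma)/(2\mu+1)}\vee\delta^{2\mu/(2\mu+1)}$ from \eqref{eq_main_estimate1}, and $\delta^{2(\mu+\gamma)^2/(2\mu+1)}\vee\delta^{2(\mu+\gamma)/(2\mu+1)}$ from \eqref{eq_main_estimate2}.

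The final step is to identify which term in each maximum dominates as $\delta\to 0$. Since $\delta<1$, the smaller exponent yields the larger quantity, so I must confirm that in every case the first exponent is the smaller one. This reduces to checking $\mu^2\leq\mu$, $\mu(\mu+\gamma)\leq\mu$, and $(\mu+\gamma)^2\leq\mu+\gamma$, all of which are immediate consequences of $\mu+\gamma\leq 1$. I foresee no substantive obstacle in this proof; the only real pitfall is keeping track of which exponent is smaller, since inversion of $\theta$ and repeated composition of power functions invite arithmetic slips.
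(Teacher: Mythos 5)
Your proposal is correct and takes essentially the same route as the paper, which obtains Corollary~\ref{corl:1} by exactly this specialization: the paragraph preceding the corollary notes that Assumption~\ref{assone} holds for the Tikhonov filter with $\vf(t)=t^\mu$, $\vk(t)=t^\gamma$ under $\mu>0$, $\gamma\in[0,1/2]$, $\mu+\gamma\leq 1$, after which one substitutes $\theta^{-1}(\delta)=\delta^{2/(2\mu+1)}$ into Theorem~\ref{thm:main}. Your resolution of the maxima via $\mu^2\leq\mu$, $\mu(\mu+\gamma)\leq\mu$, and $(\mu+\gamma)^2\leq\mu+\gamma$ (all from $\mu+\gamma\leq 1$) is exactly the arithmetic behind the stated rates.
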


\begin{remk}
If we use the generalized qualification condition \eqref{eq:assumption_g_3_1}
and replace the condition $\mu+\gamma \leq 1$ by $\mu+\gamma\leq \mu_0$,
then the rates in Corollary~\ref{corl:1} have to be replaced by  
%
%
%
\begin{align*}
 |  x_{\alpha(y^\delta)}^\delta-x| &= 
O\left( \delta^{\frac{2\mu}{2\mu+1}  \frac{\mu}{\mu_0}} \right), \qquad 
 |\lg f, x_{\alpha(y^\delta)}^\delta-x\rg| = 
O\left( \delta^{\frac{2\mu}{2\mu+1} \frac{\mu+\gamma}{\mu_0}}\right), \\
 |\lg f, x_{\alpha_\vk(y^\delta)}^\delta-x\rg| &=O\left( 
\delta^{\frac{2(\mu+\gamma) }{2\mu+1 }\frac{ \mu+\gamma  }{ \mu_0}} \right).
\end{align*}
\end{remk}

\begin{remk}
Under the conditions of Corollary \ref{corl:1}  the bound  for $\|x_{\alpha(y^\delta)}^\delta-x\|$ in  \cite{KinNeu08, Neu08} is
$O\left( \delta^{\frac{2\mu}{2\mu+1}  \mu} \right)$ 
(respectively, 
$O\left( \delta^{\frac{2\mu}{2\mu+1}  \frac{\mu}{\mu_0}} \right)$ for the case with $\mu_0$)
while  the order-optimal bound is $O\left( \delta^{\frac{2\mu}{2\mu+1}} \right)$.
For linear functionals as in the corollary, it is known that the optimal order is 
$|\lg f, x_{\alpha}^\delta-x\rg|=O\left( 
\delta^{\frac{2(\mu+\gamma)}{2\mu+1}} \right),$ as $\delta\to 0;$ see \cite{LuPer13}.
\end{remk}

\section{Proof of the main result}\label{section:proof_main}
We need the following auxiliary results. Many of them are quite standard, 
we provide the proofs to make the exposition self-contained.
At first we provide bounds for the approximation errors. 
\begin{lem}\label{lem:f_estim}
Under Assumption~\ref{assone}, there is $c>0 $ such that for all $\alpha>0$ we have
\begin{align*}
\|x_\alpha-x\| &\leq c \vf(\alpha); \\
|\lg f, x_\alpha-x\rg| &\leq c \vk(\alpha)\vf(\alpha);\\
\|\vk(T^*T)(x_\alpha-x)\| &\leq c \vk(\alpha)\vf(\alpha).
\end{align*}
\end{lem}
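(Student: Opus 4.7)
The plan is to reduce all three bounds to the standard spectral-calculus estimate of the residual $r_\alpha(\lambda) := 1 - \lambda g_\alpha(\lambda)$ acting on the source representation of $x$. Since $y = Tx$, we have
\[
x_\alpha - x = g_\alpha(T^*T)T^*Tx - x = -r_\alpha(T^*T)x,
\]
and by the source condition $x \in \Ran(\vf(T^*T))$ we may write $x = \vf(T^*T)w$ with $\|w\| < \infty$.

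First I would establish the bound on $\|x_\alpha - x\|$ directly from the spectral theorem:
\[
\|x_\alpha - x\| = \|r_\alpha(T^*T)\vf(T^*T)w\| \leq \sup_{\lambda > 0}|r_\alpha(\lambda)\vf(\lambda)| \cdot \|w\| \leq c_5 \vf(\alpha)\|w\|,
\]
where the last inequality is exactly the qualification assumption \eqref{eq:assumption_qualification}. The third estimate follows by the same argument multiplied by $\vk(T^*T)$:
\[
\|\vk(T^*T)(x_\alpha - x)\| \leq \sup_{\lambda>0}|\vk(\lambda)\vf(\lambda)r_\alpha(\lambda)|\cdot\|w\| \leq c_6\vk(\alpha)\vf(\alpha)\|w\|,
\]
which is precisely \eqref{eq:assumption_qualification_2}.

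For the second estimate, I would use the source condition on $f$, namely $f = \vk(T^*T)v$ for some $v \in X$, together with the self-adjointness of $\vk(T^*T)$ to transfer the smoothness onto the error:
\[
|\langle f, x_\alpha - x\rangle| = |\langle v, \vk(T^*T)(x_\alpha - x)\rangle| \leq \|v\|\cdot \|\vk(T^*T)(x_\alpha - x)\|,
\]
and then invoke the third bound already proved. Setting $c$ to be a suitable multiple of $c_5\vee c_6$ times $\|w\|\vee\|v\|$ yields a uniform constant.

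There is really no obstacle in this lemma: the source conditions combined with the qualification assumptions \eqref{eq:assumption_qualification}--\eqref{eq:assumption_qualification_2} do all the work, and the proof is essentially a textbook application of functional calculus for self-adjoint operators. The only point that requires (mild) care is the transfer of $\vk$ from $f$ to the error via self-adjointness; this is what makes the functional estimate tighter than Cauchy--Schwarz applied to $\|f\|\cdot\|x_\alpha - x\|$ and explains the $\vk(\alpha)\vf(\alpha)$ rate instead of merely $\vf(\alpha)$.
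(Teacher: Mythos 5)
Your proof is correct and follows essentially the same route as the paper's: both use the source representations $x=\vf(T^*T)w$, $f=\vk(T^*T)v$, bound the residual via the qualification conditions \eqref{eq:assumption_qualification} and \eqref{eq:assumption_qualification_2} through the spectral calculus, and obtain the functional estimate by transferring $\vk(T^*T)$ onto the error by self-adjointness followed by Cauchy--Schwarz. The only cosmetic difference is that the paper phrases the sup-bounds through spectral integrals $\int_0^\infty(\cdot)\,d\|E_\lambda v_x\|^2$ rather than the operator-norm functional calculus you invoke, which is the same estimate.
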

\begin{proof}
Let $x=\vf(T^*T)v_x, f=\vk(T^*T)u_f.$ Then
\begin{align*}
\| x_\alpha-x\|^2 &=
 \int_0^\infty  (1-\lambda g_\alpha(\lambda))^2 d \|E_\lambda x\|^2=
 \int_0^\infty  \vf^2(\lambda)(1-\lambda g_\alpha(\lambda))^2 d \|E_\lambda v_x\|^2\\
&\leq  K_1 \sup_\lambda (\vf(\lambda)(1-\lambda g_\alpha(\lambda)))^2 \leq K_2 
   \vf^2(\alpha),
\end{align*}  
which proves the first inequality. For the remain ones, we estimate
\begin{align*}
&\lg f, x_\alpha-x\rg^2 =
\lg \vk(T^*T)u_f, x_\alpha-x\rg^2=  \lg u_f, \vk(T^*T)(x_\alpha-x)\rg^2\\
& \qquad  \leq
\|u_f\|^2 \|\vk(T^*T)(x_\alpha-x)\|^2=
\|u_f\|^2 \int_0^\infty \vk^2(\lambda)(1-\lambda g_\alpha(\lambda))^2 d \|E_\lambda x\|^2\\
& \qquad  =
\|u_f\|^2 \int_0^\infty \vk^2(\lambda)\vf^2(\lambda)(1-\lambda g_\alpha(\lambda))^2 d \|E_\lambda v_x\|^2\\
&\qquad \leq
K_1 \sup_\lambda (\vk(\lambda)\vf(\lambda)(1-\lambda g_\alpha(\lambda)))^2 \leq K_2 
   \vk^2(\alpha)\vf^2(\alpha),
\end{align*} 
where we used \eqref{eq:assumption_qualification_2}.
\end{proof}

Next we bound the parameter choice functionals.  
\begin{lem}\label{lem:psi_estim}
Let Assumption~\ref{assone} hold.  
Then there exists a  $c>0$ such that for all  $\alpha>0$ and all $\delta>0$
   we have
\begin{align*} 
\psi(\alpha,y)&\leq \|x_\alpha-x\|, \qquad \qquad \qquad 
\psi(\alpha,y^\delta-y)\leq \|x^\delta_\alpha-x_\alpha\|, \\
\psi(\alpha,y^\delta)&\leq \|x_\alpha-x\|+\|x^\delta_\alpha-x_\alpha\|, \\
\psi_\vk(\alpha,y)&\leq \|\vk(T^*T)(x_\alpha-x)\|,    \qquad  \psi_\vk(\alpha,y^\delta-y)\leq 
\|\vk(T^*T)(x^\delta_\alpha-x_\alpha)\|,\\
\psi_\vk(\alpha,y^\delta)&\leq \|\vk(T^*T)(x_\alpha-x)\|+\|\vk(T^*T)(x^\delta_\alpha-x_\alpha)\|.
\end{align*}
\end{lem}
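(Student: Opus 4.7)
The proof plan rests on re-expressing $\psi$ (and $\psi_\vk$) as operator norms applied to a linear image of the argument, then exploiting the spectral bound $0\leq \lambda g_\alpha(\lambda)\leq 1$ from \eqref{eq:assumption_g} together with the commutativity of the functional calculus.

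My first step is to observe that for any $z\in Y$,
\[
\psi(\alpha,z)=\|(I-T^*Tg_\alpha(T^*T))\,g_\alpha(T^*T)T^*z\|,\qquad
\psi_\vk(\alpha,z)=\|\vk(T^*T)(I-T^*Tg_\alpha(T^*T))\,g_\alpha(T^*T)T^*z\|,
\]
which is verified by pulling $T^*$ past the functional calculus of $T^*T$ (i.e., using $T\,h(T^*T)=h(TT^*)\,T$) so that the integrals over the spectrum of $TT^*$ correspond to the squared norms on the right. This makes $z\mapsto\psi(\alpha,z)$ and $z\mapsto\psi_\vk(\alpha,z)$ seminorms, and in particular the triangle inequality $\psi(\alpha,y^\delta)\leq \psi(\alpha,y)+\psi(\alpha,y^\delta-y)$ (and analogously for $\psi_\vk$) immediately reduces the last bound in each block to the first two.

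For the noise-free argument $z=y=Tx$, I use $g_\alpha(T^*T)T^*y=g_\alpha(T^*T)T^*Tx=x_\alpha$, so
\[
\psi(\alpha,y)=\|(I-T^*Tg_\alpha(T^*T))x_\alpha\|.
\]
By commutativity of the functional calculus of $T^*T$,
\[
(I-T^*Tg_\alpha(T^*T))x_\alpha
=T^*Tg_\alpha(T^*T)(I-T^*Tg_\alpha(T^*T))x
=T^*Tg_\alpha(T^*T)(x-x_\alpha),
\]
and since $\|T^*Tg_\alpha(T^*T)\|=\sup_{\lambda>0}\lambda g_\alpha(\lambda)\leq 1$ by \eqref{eq:assumption_g}, this yields $\psi(\alpha,y)\leq\|x_\alpha-x\|$. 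For $z=y^\delta-y$, I simply note $g_\alpha(T^*T)T^*(y^\delta-y)=x_\alpha^\delta-x_\alpha$, so
\[
\psi(\alpha,y^\delta-y)=\|(I-T^*Tg_\alpha(T^*T))(x_\alpha^\delta-x_\alpha)\|\leq\|x_\alpha^\delta-x_\alpha\|,
\]
again using $\|I-T^*Tg_\alpha(T^*T)\|\leq 1$ from \eqref{eq:assumption_g}.

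The $\psi_\vk$-bounds are obtained by the same two identities with an extra factor $\vk(T^*T)$, which commutes with all the other spectral operators in $T^*T$. Thus $\vk(T^*T)(I-T^*Tg_\alpha(T^*T))x_\alpha=T^*Tg_\alpha(T^*T)\vk(T^*T)(x-x_\alpha)$ is bounded in norm by $\|\vk(T^*T)(x_\alpha-x)\|$, and analogously for the noise term. There is no real obstacle: the whole argument is a routine spectral-calculus manipulation, and the only substantive ingredients are the product rule for operator functions of $T^*T$ and the uniform bound $0\leq\lambda g_\alpha(\lambda)\leq 1$. The closest thing to a subtlety is keeping track of which factors $(I-T^*Tg_\alpha(T^*T))$, $g_\alpha(T^*T)T^*T$, and $\vk(T^*T)$ to commute past which, in order to expose $x-x_\alpha$ (or $x_\alpha^\delta-x_\alpha$) as a factor inside the norm.
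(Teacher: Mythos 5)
Your proof is correct and takes essentially the same route as the paper's: both rewrite $\psi(\alpha,y)=\|(I-T^*Tg_\alpha(T^*T))\,T^*Tg_\alpha(T^*T)x\|$ and discard the factor $\lambda g_\alpha(\lambda)\leq 1$ (the paper via the spectral integral in $E_\lambda$, you via the operator-norm bound after commuting), handle the noise term identically through $g_\alpha(T^*T)T^*(y^\delta-y)=x_\alpha^\delta-x_\alpha$, and treat $\psi_\vk$ by inserting the commuting factor $\vk(T^*T)$. Your only addition is to spell out the seminorm identity $\psi(\alpha,z)=\|(I-T^*Tg_\alpha(T^*T))g_\alpha(T^*T)T^*z\|$ and the resulting triangle inequality $\psi(\alpha,y^\delta)\leq\psi(\alpha,y)+\psi(\alpha,y^\delta-y)$, a step the paper leaves implicit.
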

\begin{proof}
\begin{align*}
&\psi^2(\alpha,y)=\| (I-T^*Tg_\alpha(T^*T))x_\alpha\|^2=
\| (I-T^*Tg_\alpha(T^*T)) T^*T g_\alpha(T^*T)x \|^2\\
&\quad= 
\int^\infty_0 (1-\lambda g_\alpha^2(\lambda))^2 (\lambda g_\alpha(\lambda))^2 d\|E_\lambda x\|^2
\leq \int^\infty_0 (1-\lambda g_\alpha^2(\lambda))^2   d\|E_\lambda x\|^2 = \|x_\alpha-x\|^2. \\
&\psi(\alpha,y^\delta-y)= 
\|(I-T^*Tg_\alpha(T^*T))(x_\alpha^\delta-x_\alpha)\|\leq \|x^\delta_\alpha-x_\alpha\|.
\end{align*}
The inequalities for $\psi_\vk$ follow in an analogous way. 
\end{proof}
The following result is a straightforward consequence of \eqref{eq:assumption_g} and $\|y^\delta-y\|\leq \delta$.
\begin{lem}\label{lem:x_delta_x}
Let Assumption~\ref{assone} hold. 
There exists $c>0 $ such that for all  $\alpha>0$ and all $\delta>0$
   we have
\[
\|x_\alpha^\delta-x_\alpha\|^2=\int_0^\infty \lambda g^2_\alpha(\lambda) d\|F_\lambda (y^\delta-y)\|^2 \leq c \left( \frac{\delta}{\sqrt{\alpha}}\right)^2.
\]
\end{lem}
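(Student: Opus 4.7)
The plan is straightforward: establish the stated spectral integral representation of $\|x_\alpha^\delta - x_\alpha\|^2$, then reduce it to the desired $\delta^2/\alpha$ bound using the filter estimate from Assumption~\ref{assone}(i).

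First, I would rewrite the difference via the standard intertwining identity $g_\alpha(T^*T) T^* = T^* g_\alpha(TT^*)$, which holds for any bounded Borel function by the spectral calculus. This gives
\[
x_\alpha^\delta - x_\alpha = g_\alpha(T^*T)T^*(y^\delta - y) = T^* g_\alpha(TT^*)(y^\delta - y).
\]
Taking squared norms and using $\|T^* u\|^2 = \langle TT^* u, u \rangle$ together with the spectral theorem for $TT^*$ with spectral family $F_\lambda$, I obtain
\[
\|x_\alpha^\delta - x_\alpha\|^2 = \langle TT^* g_\alpha(TT^*)(y^\delta-y), g_\alpha(TT^*)(y^\delta-y)\rangle = \int_0^\infty \lambda\, g_\alpha^2(\lambda)\, d\|F_\lambda(y^\delta - y)\|^2,
\]
which establishes the claimed equality in the lemma.

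Next, I would invoke Assumption~\ref{assone}(i), namely the bound $\sup_{\lambda>0} \sqrt{\lambda}\, g_\alpha(\lambda) \leq c_1/\sqrt{\alpha}$ from \eqref{eq:assumption_g}. Squaring gives the uniform pointwise estimate $\lambda\, g_\alpha^2(\lambda) \leq c_1^2/\alpha$ for all $\lambda > 0$, which I factor out of the integral. Combined with $\int_0^\infty d\|F_\lambda(y^\delta-y)\|^2 = \|y^\delta - y\|^2 \leq \delta^2$, this yields
\[
\|x_\alpha^\delta - x_\alpha\|^2 \leq \frac{c_1^2}{\alpha}\, \|y^\delta - y\|^2 \leq c_1^2\, \frac{\delta^2}{\alpha},
\]
so the constant $c = c_1^2$ works.

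There is essentially no obstacle here: the only delicate point is the correct use of the intertwining $g_\alpha(T^*T)T^* = T^*g_\alpha(TT^*)$ to move to the spectral family of $TT^*$ that appears in the claimed integral. Everything else is a one-line application of the spectral theorem and the filter bound already assumed.
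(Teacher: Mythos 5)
Your proof is correct and follows exactly the route the paper intends: the paper states this lemma without proof as ``a straightforward consequence of \eqref{eq:assumption_g} and $\|y^\delta-y\|\leq\delta$'', which is precisely your argument. Your additional care in justifying the spectral identity via the intertwining relation $g_\alpha(T^*T)T^* = T^*g_\alpha(TT^*)$ merely fills in a step the authors take for granted.
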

\begin{lem}\label{lem:estim_psi}
Let Assumption~\ref{assone} hold. 
We have for $\delta>0,$ 
\begin{align} \label{eq:378}
\psi(\alpha(y^\delta), y^\delta)&= \inf_\alpha    \psi(\alpha,y^\delta)\leq \inf_\alpha \left(\|x_\alpha-x\|+\|x^\delta_\alpha-x_\alpha\|\right)
\leq c_0 \vf(\theta^{-1}(\delta)),  \\
\psi_\vk(\alpha(y^\delta), y^\delta)&\leq c_1 \psi(\alpha(y^\delta), y^\delta) \leq c_2 \vf(\theta^{-1}(\delta)), \ 
\label{eq:psi_kappa}
\end{align} 
where $c_0, c_1, c_2$ are constants  independent of $\delta,$  $\theta(t)=\vf(t)\sqrt{t}$, 
and $\theta^{-1}$ is its inverse function.
\end{lem}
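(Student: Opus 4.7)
The plan is to verify the two chains of inequalities separately, by combining the preceding lemmas with the standard balancing choice $\alpha = \theta^{-1}(\delta)$.

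For the first chain \eqref{eq:378}, the equality is by definition of $\alpha(y^\delta)$ as a minimizer of $\psi(\cdot, y^\delta)$. The first inequality follows immediately from Lemma~\ref{lem:psi_estim}, which already supplies $\psi(\alpha, y^\delta) \leq \|x_\alpha - x\| + \|x_\alpha^\delta - x_\alpha\|$ for every $\alpha > 0$; taking infima on both sides yields the claim. To bound the resulting infimum, I apply Lemma~\ref{lem:f_estim} (which gives $\|x_\alpha - x\| \leq c\,\vf(\alpha)$) and Lemma~\ref{lem:x_delta_x} (which gives $\|x_\alpha^\delta - x_\alpha\| \leq c\,\delta/\sqrt{\alpha}$), obtaining
\[
\inf_\alpha \psi(\alpha, y^\delta) \;\leq\; \inf_{\alpha > 0} \Bigl(c\,\vf(\alpha) + \tfrac{c\,\delta}{\sqrt{\alpha}}\Bigr).
\]
The natural balancing is $\alpha = \theta^{-1}(\delta)$: by the definition $\theta(t) = \vf(t)\sqrt{t}$ we have $\delta/\sqrt{\alpha} = \theta(\alpha)/\sqrt{\alpha} = \vf(\alpha) = \vf(\theta^{-1}(\delta))$, so both summands coincide up to constants and the sum is bounded by $2c\,\vf(\theta^{-1}(\delta))$. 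This establishes \eqref{eq:378} with $c_0 = 2c$.

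For the second chain \eqref{eq:psi_kappa}, the key observation is that the spectral family $F_\lambda$ associated with $TT^*$ is supported in $[0, \|T\|^2]$. Since $\vk$ is continuous and increasing on positive reals, one has $\vk(\lambda) \leq \vk(\|T\|^2)$ throughout this support. Comparing the integrands in the definitions of $\psi_\vk^2$ and $\psi^2$ pointwise therefore yields
\[
\psi_\vk(\alpha, y^\delta) \;\leq\; \vk(\|T\|^2)\,\psi(\alpha, y^\delta) \qquad \text{for every } \alpha > 0.
\]
Evaluating this inequality at $\alpha = \alpha(y^\delta)$ and invoking the first chain, I obtain \eqref{eq:psi_kappa} with $c_1 = \vk(\|T\|^2)$ and $c_2 = c_0 c_1$.

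I expect no genuine obstacle here: both chains reduce to pointwise spectral-calculus inequalities already packaged in Lemmas~\ref{lem:f_estim}--\ref{lem:x_delta_x}, plus the familiar balancing argument for the source-condition/noise-propagation trade-off. The only minor subtlety is the well-definedness of $\theta^{-1}(\delta)$ for small $\delta$, which follows from $\theta$ being continuous and strictly increasing under the standing hypothesis that $\vf$ is a continuous increasing index function; and the observation that $\vk(\|T\|^2)$ is a finite deterministic constant, so $c_1, c_2$ are indeed independent of $\delta$.
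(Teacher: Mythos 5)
Your proposal is correct and follows essentially the same route as the paper: the equality-plus-Lemma~\ref{lem:psi_estim} step, the balancing choice $\bar\alpha=\theta^{-1}(\delta)$ combined with Lemmas~\ref{lem:f_estim} and~\ref{lem:x_delta_x} for \eqref{eq:378}, and the boundedness of $\vk$ on the spectrum of $T^*T$ to deduce \eqref{eq:psi_kappa} from \eqref{eq:378}. Your version is in fact slightly more careful than the paper's one-line justification of \eqref{eq:psi_kappa}, since you make the constant $c_1=\vk(\|T\|^2)$ and the spectral support explicit.
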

\begin{proof}
Let $\bar \alpha $ be such that $\vf(\bar \alpha)=\frac{\delta}{\sqrt{\bar \alpha}},$ i.e., 
${\bar \alpha}=\theta^{-1}(\delta).$ Then \eqref{eq:378}
  follows from Lemmas \ref{lem:f_estim} and \ref{lem:x_delta_x}, and the following calculations
\begin{align*} 
\inf_\alpha \left(\|x_\alpha-x\|+\|x^\delta_\alpha-x_\alpha\|\right)&\leq 
C\inf_\alpha \left(\vf(\alpha)+\frac{\delta}{\sqrt{\alpha}}\right)\leq 
C  \left(\vf(\bar \alpha)+\frac{\delta}{\sqrt{\bar \alpha}}\right) \\
&=
2C \vf(\theta^{-1}(\delta)).
\end{align*}
Inequality \eqref{eq:psi_kappa} follows from \eqref{eq:378} because $\vk$ is bounded on $[0,\|T\|]$.
\end{proof}
The next lemma gives a very important consequence of \eqref{eq:main_assumption_kappa}, which is crucial 
for our proofs. In the sequel, we use the
symbols $K_1,K_2,\ldots$, and $C$ for generic constants that may take different values in different formulas.
\begin{lem}\label{lem:nontrivial_estimate}
Let Assumption~\ref{assone} hold and assume the generalized noise condition 
\eqref{eq:main_assumption_kappa}.  Then 
there exist constants $K_1, K_2,$ and $\delta_0>0$ such that for all $\delta\in(0,\delta_0),\ \alpha>0$:
\[
|\lg f,  (x_\alpha^\delta-x_\alpha)\rg| \leq K_1 \|\vk(T^*T)(x_\alpha^\delta-x_\alpha)\| \leq K_2 \psi_\vk(\alpha, y^\delta-y).
\]
\end{lem}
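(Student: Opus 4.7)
The plan is to establish the two inequalities separately, with the second one being the crux of the argument.

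For the first inequality $|\lg f,x_\alpha^\delta-x_\alpha\rg|\leq K_1\|\vk(T^*T)(x_\alpha^\delta-x_\alpha)\|$: since $f\in\Ran(\vk(T^*T))$, write $f=\vk(T^*T)u_f$ for some $u_f\in X$. By self-adjointness of $\vk(T^*T)$ and Cauchy--Schwarz,
\[
|\lg f,x_\alpha^\delta-x_\alpha\rg|=|\lg u_f,\vk(T^*T)(x_\alpha^\delta-x_\alpha)\rg|\leq \|u_f\|\,\|\vk(T^*T)(x_\alpha^\delta-x_\alpha)\|,
\]
so $K_1=\|u_f\|$ does the job; no noise condition is used here.

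For the second inequality $\|\vk(T^*T)(x_\alpha^\delta-x_\alpha)\|\leq K_2\psi_\vk(\alpha,y^\delta-y)$, use the intertwining identity $\vk(T^*T)g_\alpha(T^*T)T^*=T^*\vk(TT^*)g_\alpha(TT^*)$ to represent
\[
\|\vk(T^*T)(x_\alpha^\delta-x_\alpha)\|^2=\int_0^\infty \vk^2(\lambda)\,\lambda\,g_\alpha^2(\lambda)\,d\|F_\lambda(y^\delta-y)\|^2,
\]
and split the integral at $\lambda=\alpha$. On $(0,\alpha)$ apply \eqref{eq:assumption_g_2} to get $(1-\lambda g_\alpha(\lambda))^2\geq c_2^2$, so the contribution is bounded by $c_2^{-2}\psi_\vk^2(\alpha,y^\delta-y)$. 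On $(\alpha,\infty)$ use $\lambda g_\alpha(\lambda)\leq 1$ from \eqref{eq:assumption_g} to majorize $\lambda g_\alpha^2(\lambda)\leq \lambda^{-1}$, invoke the noise condition \eqref{eq:main_assumption_kappa} to transfer the spectral mass to the low-frequency window, and then use \eqref{eq:assumption_g_2} once more ($g_\alpha(\lambda)\geq c_3/\alpha$ and $(1-\lambda g_\alpha(\lambda))\geq c_2$ on $(0,\alpha)$) to estimate
\[
\int_0^\alpha \lambda\,\vk^2(\lambda)\,d\|F_\lambda(y^\delta-y)\|^2\leq \frac{\alpha^2}{c_2^2 c_3^2}\psi_\vk^2(\alpha,y^\delta-y).
\]
Chaining the three inequalities absorbs the factor $\alpha^2$ produced by \eqref{eq:main_assumption_kappa} and yields a bound for the high-frequency part in terms of $\psi_\vk^2(\alpha,y^\delta-y)$ with a constant of the form $c_{10}/(c_2^2 c_3^2)$. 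Combining the two parts gives the claim. The restriction $\delta\in(0,\delta_0)$ is inherited directly from \eqref{eq:main_assumption_kappa}.

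The main obstacle is, unsurprisingly, the high-frequency integral over $(\alpha,\infty)$: the residual factor $(1-\lambda g_\alpha(\lambda))$ present in $\psi_\vk$ is generally small precisely there, so one cannot bound it below by a constant and no direct spectral estimate suffices. The Muckenhoupt-type condition \eqref{eq:main_assumption_kappa} is tailor-made to compensate for this by trading high-frequency spectral mass of the noise for low-frequency mass, where the filter factor is well controlled; this is the key mechanism that makes the heuristic strategy work.
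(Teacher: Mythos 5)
Your proposal is correct and follows essentially the same route as the paper's proof: Cauchy--Schwarz with $f=\vk(T^*T)u_f$ for the first inequality, then a spectral split at $\lambda=\alpha$, with $\lambda g_\alpha(\lambda)\leq 1$ and the noise condition \eqref{eq:main_assumption_kappa} handling the high-frequency part and the lower bounds in \eqref{eq:assumption_g_2} recovering $\psi_\vk$ on $(0,\alpha)$. The only (immaterial) difference is that you bound the low-frequency part directly via $(1-\lambda g_\alpha(\lambda))\geq c_2$, whereas the paper first passes it through $\alpha^{-2}\int_0^\alpha\lambda\vk^2(\lambda)\,d\|F_\lambda(y^\delta-y)\|^2$ using $g_\alpha(\lambda)\leq c_4/\alpha$ before converting back to $\psi_\vk$.
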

\begin{proof} The first inequality is proved similarly to Lemma \ref{lem:f_estim}. 
Let us verify the second inequality. By splitting the integral we obtain
\[
\|\vk(T^*T)(x_\alpha^\delta-x_\alpha)\|^2 =
\int_0^\infty \vk^2(\lambda)\lambda g_\alpha^2(\lambda) d \|F_\lambda (y^\delta- y)\|^2=
\int_0^\alpha[\dots]+\int_\alpha^\infty[\dots].
\]
It follows from \eqref{eq:assumption_g} and \eqref{eq:main_assumption_kappa}  that
\begin{align*}
&\int_\alpha^\infty \vk^2(\lambda)\lambda g_\alpha^2(\lambda) d \|F_\lambda (y^\delta- y)\|^2=
\int_\alpha^\infty \vk^2(\lambda)\lambda^{-1} (\lambda g_\alpha(\lambda))^2 d \|F_\lambda (y^\delta- y)\|^2\\
& \qquad \leq
\int_\alpha^\infty \vk^2(\lambda)\lambda^{-1}  d \|F_\lambda (y^\delta- y)\|^2 \leq 
K_1 \alpha^{-2} \int_0^\alpha \lambda   \vk^2(\lambda)d\| F_\lambda (y^\delta-y)\|^2.
\end{align*}
The second assumption in \eqref{eq:assumption_g_2} yields 
that $g^2_\alpha(\lambda)\leq \frac{\const}{\alpha^2}$ for $ \lambda\in(0,\alpha).$ Thus, 
\[
\int_0^\alpha \vk^2(\lambda)\lambda g_\alpha^2(\lambda) d \|F_\lambda (y^\delta- y)\|^2
\leq K_2 \alpha^{-2} \int_0^\alpha \vk^2(\lambda)\lambda   d \|F_\lambda (y^\delta- y)\|^2,
\]
and consequently
\[
\|\vk(T^*T)(x_\alpha^\delta-x_\alpha)\|^2 \leq K_3 \alpha^{-2} \int_0^\alpha \vk^2(\lambda)\lambda   d \|F_\lambda (y^\delta- y)\|^2.
\]
Since $(1-\lambda g_\alpha(\lambda))\geq \const>0$ and $g^2_\alpha(\lambda)\geq \frac{\const}{\alpha^2}>0$ 
for $ \lambda\in(0,\alpha), $ see \eqref{eq:assumption_g_2}, we have
\begin{align*}
&\alpha^{-2}\int_0^\alpha \vk^2(\lambda)\lambda   d \|F_\lambda (y^\delta- y)\|^2\leq K_4 
  \int_0^\alpha \vk^2(\lambda)\lambda g^2_\alpha(\lambda) (1-\lambda g_\alpha(\lambda))^2  d
  \|F_\lambda (y^\delta- y)\|^2
\\
& \qquad \leq K_4 \int_0^\infty \vk^2(\lambda)\lambda g^2_\alpha(\lambda)(1-\lambda g_\alpha(\lambda))^2  d \|F_\lambda (y^\delta- y)\|^2= K_4 \psi^2_\varkappa(\alpha, y^\delta-y).
\end{align*}
\end{proof}


\begin{lem}\label{lem:bound_y_kappa} 
Let $y\not = 0$. Then there exist $C>0 $ and $\delta_0>0$ 
 such that for all $\delta\in(0,\delta_0) \mbox{ and } \alpha\in(0,1)$
\be 
  \psi(\alpha,y^\delta)\geq C \alpha \qquad \mbox{ and } \qquad 
   \psi_\vk(\alpha,y^\delta)\geq C \alpha. \label{eq:assumption_ineq_psi_kappa}
\ee
\end{lem}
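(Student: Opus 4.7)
The plan is to use \eqref{eq:assumption_g_3} to pull a factor $\alpha^2$ out of the spectral integrands defining $\psi^2$ and $\psi_\vk^2$, and then to show that the residual integral evaluated at $y^\delta$ is bounded below by a positive constant independent of $\delta$ for $\delta$ small. Concretely, writing
\[
\psi^2(\alpha, y^\delta) = \int_0^\infty \left(\frac{(1-\lambda g_\alpha(\lambda))g_\alpha(\lambda)}{\alpha}\right)^{\!2}\!\alpha^2\lambda\,d\|F_\lambda y^\delta\|^2
\]
and applying $(1-\lambda g_\alpha(\lambda))g_\alpha(\lambda)/\alpha \geq k(\lambda)$ for $\alpha\in(0,\|T\|]$ yields
\[
\psi^2(\alpha, y^\delta) \geq \alpha^2\,\|\tilde g(TT^*)y^\delta\|^2,\qquad \psi_\vk^2(\alpha, y^\delta) \geq \alpha^2\,\|\tilde g_\vk(TT^*)y^\delta\|^2,
\]
with $\tilde g(\lambda):=\sqrt{\lambda}\,k(\lambda)$ and $\tilde g_\vk(\lambda):=\vk(\lambda)\sqrt{\lambda}\,k(\lambda)$.

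Next I would verify that $\tilde g$ and $\tilde g_\vk$ define bounded operators via the functional calculus. For any fixed $\alpha_0\in(0,\|T\|]$ one has $k(\lambda)\leq g_{\alpha_0}(\lambda)/\alpha_0$, and combined with $\sqrt{\lambda}\,g_{\alpha_0}(\lambda)\leq c_1/\sqrt{\alpha_0}$ from \eqref{eq:assumption_g} this gives $\tilde g(\lambda)\leq c_1/\alpha_0^{3/2}$. Since $\vk$ is continuous and monotone on $[0,\|T\|^2]$, an analogous bound holds for $\tilde g_\vk$. Borel measurability of $k$ follows because $h(\lambda,\alpha):=(1-\lambda g_\alpha(\lambda))g_\alpha(\lambda)/\alpha$ is continuous in $\alpha$, so $k(\lambda)=\inf_{\alpha\in\mathbb{Q}\cap(0,\|T\|]}h(\lambda,\alpha)$ is a countable infimum of continuous functions.

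Finally, it remains to bound $\|\tilde g(TT^*)y^\delta\|$ below. The hypothesis $\Ker(T^*)=\{0\}$ gives $\Ker(TT^*)=\{0\}$, so the spectral measure $d\|F_\lambda y\|^2$ places no mass at $\lambda=0$; since $y\neq 0$ and $\lambda k^2(\lambda)>0$ on $(0,\|T\|^2]$ by \eqref{eq:assumption_g_3}, this forces $I(y):=\|\tilde g(TT^*)y\|^2 > 0$. The triangle inequality applied to the bounded operator $\tilde g(TT^*)$, together with $\|y^\delta-y\|\leq\delta$, then gives
\[
\|\tilde g(TT^*)y^\delta\|\geq \sqrt{I(y)} - \|\tilde g(TT^*)\|\,\delta \geq \tfrac12\sqrt{I(y)}
\]
provided $\delta\leq\delta_0:=\sqrt{I(y)}/(2\|\tilde g(TT^*)\|)$. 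Combining with the first step yields $\psi(\alpha,y^\delta)\geq C\alpha$ with $C=\tfrac12\sqrt{I(y)}$, and the same argument with $\tilde g_\vk$ in place of $\tilde g$ establishes the bound for $\psi_\vk$.

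The main conceptual point—and the only genuine obstacle—is verifying $I(y)>0$: the hypothesis $y\neq 0$ together with injectivity of $T^*$ is precisely what prevents the spectral mass from concentrating at $\lambda=0$, where $\tilde g$ degenerates. The boundedness of $\tilde g$ and Borel measurability of $k$ are minor technicalities dispatched by \eqref{eq:assumption_g} and the continuity of $g_\alpha$, respectively. (The apparent restriction to $\alpha\in(0,\|T\|]$ in the bound on $k(\lambda)$ covers the lemma's range $(0,1)$ in the standard normalization $\|T\|\geq 1$; otherwise one merely rescales.)
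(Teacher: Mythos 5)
Your proposal is correct and takes essentially the same route as the paper's proof: both use \eqref{eq:assumption_g_3} to pull the factor $\alpha^2$ out of the spectral integrand, bound the resulting function $\lambda\vk^2(\lambda)k^2(\lambda)$ by a fixed-$\alpha_0$ filter via \eqref{eq:assumption_g}, and then show the noise-free integral is positive (from $y\neq 0$ and injectivity of $T^*$) and stable under perturbations of size $\delta$ for $\delta$ small. Your packaging via the bounded operator $\tilde g(TT^*)$ with a norm triangle inequality is only a cosmetic variant of the paper's truncation of the integral at a fixed $\bar\alpha$ combined with the measure inequality based on $2^{-1}\|F_\lambda y\|^2-\|F_\lambda(y^\delta-y)\|^2$, and the implicit restriction $\alpha\leq\|T\|$ that you flag is present in the paper's argument as well.
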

\begin{proof}
Let us only verify the second inequality. We follow the course of the proof from  \cite{KinNeu08, Neu08}. 
Let $\bar \alpha$ be fixed. It follows from \eqref{eq:assumption_g_2} that
\begin{align*}
\psi_\vk^2(\alpha,y^\delta) &
=\int_0^\infty \vk^2(\lambda)(1-\lambda g_\alpha(\lambda))^2 \lambda g_\alpha^2(\lambda) 
d \|F_\lambda y^\delta\|^2\\
&\geq
 \int_{\bar \alpha}^\infty \vk^2(\lambda) \alpha^2(\frac{(1-\lambda g_\alpha(\lambda))
 g_\alpha(\lambda)}{\alpha})^2 \lambda   d \|F_\lambda y^\delta\|^2\\&
 \geq
 \int_{\bar \alpha}^\infty \vk^2(\lambda) \alpha^2\inf_{a\in(0,\|T\|]}\left(\frac{(1-\lambda g_a(\lambda))
 g_a(\lambda)}{a}\right)^2 \lambda   d \|F_\lambda y^\delta\|^2\\
 &=
 \alpha^2\int_{\bar \alpha}^\infty  \vk^2(\lambda)  k^2(\lambda)\lambda d \|F_\lambda y^\delta\|^2\\
 &\geq
  \alpha^2 \left( \int_{\bar \alpha}^\infty \vk^2(\lambda) k^2(\lambda)\lambda d\left[ 2^{-1}
  \|F_\lambda y\|^2 -\|F_\lambda (y^\delta-y)\|^2\right]\right),
\end{align*}
 where $k$ is from \eqref{eq:assumption_g_2}. Set $h(\lambda)=
 \vk^2(\lambda)k^2(\lambda)\lambda, \lambda>0$;  the function $h$ is positive.
It follows from the definition of  $k$ that   $h(\lambda)\leq \vk^2(\lambda) 
(1-\lambda g^2_1(\lambda))^2\lambda g^2_1(\lambda)$. So, all considered integrals are finite.
%
%
Select $ \bar \alpha>0$ such that $\int_{\bar \alpha}^\infty    d \|F_\lambda y\|^2>0.$ Then
$\int_{\bar \alpha}^\infty   h(\lambda) d2^{-1} \|F_\lambda y\|^2>0.$ 
Since 
\[
\lim_{\delta\to0}\int_{\bar \alpha}^\infty    h(\lambda) d\|F_\lambda (y^\delta-y)\|^2=0,
\]
there is $\delta_0>0$ such that 
\[
\int_{\bar \alpha}^\infty   h(\lambda) d\|F_\lambda (y^\delta-y)\|^2\leq 4^{-1}
\int_{\bar \alpha}^\infty   h(\lambda) d  \|F_\lambda y\|^2.
\]
Hence  
we get the second inequality in \eqref{eq:assumption_ineq_psi_kappa} with 
$C=4^{-1}
\int_{\bar \alpha}^\infty  h(\lambda) d \|F_\lambda y\|^2$.
\end{proof}

Bounds for $\psi_\vk(\alpha_\vk(y^\delta), y^\delta) $ are given in the following statement. 
\begin{lem}\label{lem:estim_psi_kappa}
We have
\be\label{eq:539}
\psi_\vk(\alpha_\vk(y^\delta), y^\delta)
\leq C \vk(\theta^{-1}(\delta))\vf(\theta^{-1}(\delta)), 
\ee
   where $C$ is a  constant   independent of $\delta.$ 
\end{lem}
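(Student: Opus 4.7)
The plan is to exploit the minimizer definition of $\alpha_\vk(y^\delta)$: for any fixed choice $\bar\alpha$, one has
$\psi_\vk(\alpha_\vk(y^\delta), y^\delta) \leq \psi_\vk(\bar\alpha, y^\delta)$, so it suffices to exhibit a specific $\bar\alpha$ giving the desired bound. The natural candidate is $\bar\alpha := \theta^{-1}(\delta)$, as in Lemma~\ref{lem:estim_psi}, which balances approximation error and noise amplification.

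First I would apply the triangle-type splitting from Lemma~\ref{lem:psi_estim} to get
\[
\psi_\vk(\bar\alpha, y^\delta) \leq \|\vk(T^*T)(x_{\bar\alpha}-x)\| + \|\vk(T^*T)(x_{\bar\alpha}^\delta - x_{\bar\alpha})\|.
\]
The first (approximation) term is already controlled by Lemma~\ref{lem:f_estim} by $C\vk(\bar\alpha)\vf(\bar\alpha)$, so only the noise term requires new work.

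The main step is estimating $\|\vk(T^*T)(x_\alpha^\delta-x_\alpha)\|^2 = \int_0^\infty \vk^2(\lambda)\lambda g_\alpha^2(\lambda)\,d\|F_\lambda(y^\delta-y)\|^2$ by $C\vk^2(\alpha)\delta^2/\alpha$. I would split the integral at $\lambda=\alpha$. For $\lambda\leq\alpha$ monotonicity of $\vk$ gives $\vk(\lambda)\leq\vk(\alpha)$ and \eqref{eq:assumption_g} gives $\lambda g_\alpha^2(\lambda)\leq c_1^2/\alpha$. For $\lambda>\alpha$ I would use \eqref{eq:299} in the form $\vk^2(\lambda)/\lambda\leq c_7^2\vk^2(\alpha)/\alpha$ combined with $\lambda g_\alpha(\lambda)\leq 1$ from \eqref{eq:assumption_g} to again obtain the pointwise bound $\vk^2(\lambda)\lambda g_\alpha^2(\lambda)\leq C\vk^2(\alpha)/\alpha$. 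Integrating against $d\|F_\lambda(y^\delta-y)\|^2$ and using $\|y^\delta-y\|\leq\delta$ yields $\|\vk(T^*T)(x_\alpha^\delta-x_\alpha)\|\leq C\vk(\alpha)\delta/\sqrt{\alpha}$.

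Combining the two bounds gives
\[
\psi_\vk(\bar\alpha,y^\delta) \leq C\vk(\bar\alpha)\Bigl(\vf(\bar\alpha)+\frac{\delta}{\sqrt{\bar\alpha}}\Bigr),
\]
and plugging in $\bar\alpha=\theta^{-1}(\delta)$, for which $\vf(\bar\alpha)=\delta/\sqrt{\bar\alpha}$ by the defining relation $\theta(t)=\vf(t)\sqrt{t}$, yields the claim $C\vk(\theta^{-1}(\delta))\vf(\theta^{-1}(\delta))$. The main obstacle is the noise term estimate, where the qualification condition \eqref{eq:299} on $\vk$ is essential to handle the tail contribution from large eigenvalues; without it the factor $\vk(\alpha)$ could not be extracted cleanly. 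Note that the noise condition \eqref{eq:main_assumption_kappa} is not needed here, only the deterministic bound $\|y^\delta-y\|\leq\delta$.
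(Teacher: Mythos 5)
Your proposal is correct and follows essentially the same route as the paper: minimize over $\alpha$, split $\psi_\vk$ via Lemma~\ref{lem:psi_estim}, bound the approximation term by Lemma~\ref{lem:f_estim}, and balance at $\bar\alpha=\theta^{-1}(\delta)$. The only difference is that where the paper simply cites \cite[page 107]{LuPer13} for the bound on the infimum, you derive the noise-term estimate $\|\vk(T^*T)(x_\alpha^\delta-x_\alpha)\|\leq C\vk(\alpha)\delta/\sqrt{\alpha}$ explicitly from \eqref{eq:assumption_g} and \eqref{eq:299}, which is a valid, self-contained filling-in of that citation.
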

\begin{proof} Lemmas   \ref{lem:f_estim} and \ref{lem:psi_estim} yield  that
\[
\psi_\vk(\alpha_\vk(y^\delta), y^\delta)= 
\inf_\alpha     \psi_\vk(\alpha,y^\delta)\leq 
\inf_\alpha \left(   \|\vk(T^*T)(x_\alpha-x)\|+\|\vk(T^*T)(x^\delta_\alpha-x_\alpha)\|\right).
\]
It follows from \cite[page 107]{LuPer13} that the right hand side is less than or equal to
$C \vk(\theta^{-1}(\delta))\vf(\theta^{-1}(\delta)).$

\end{proof}

\begin{proof}[Proof of Theorem  \ref{thm:main}]
By Lemmas~\ref{lem:f_estim}, \ref{lem:nontrivial_estimate}, and \ref{lem:psi_estim},
\allowdisplaybreaks
\begin{align}
& |\lg f, x_{\alpha(y^\delta)}^\delta-x\rg|
 \leq  |\lg f, x_{\alpha(y^\delta)}^\delta-x_{\alpha(y^\delta)}\rg|+ 
 |\lg f, x_{\alpha(y^\delta)}-x\rg|\\
 & \stackrel{\leq}{\mbox{\scriptsize Lemma~\ref{lem:f_estim}}} \
    |\lg f, x_{\alpha(y^\delta)}^\delta-x_{\alpha(y^\delta)}\rg|+  
    K_1 \vk(\alpha(y^\delta))\vf(\alpha(y^\delta))\\
    &\stackrel{\leq}{\mbox{\scriptsize Lemma \ref{lem:nontrivial_estimate}}}\ 
K_2 \left(\psi_\vk(\alpha(y^\delta), y^\delta-y) + \vk(\alpha(y^\delta))\vf(\alpha(y^\delta))\right)\\
 &\leq 
 K_2 \left(\psi_\vk(\alpha(y^\delta), y^\delta) +\psi_\vk(\alpha(y^\delta), y) + 
 \vk(\alpha(y^\delta))\vf(\alpha(y^\delta))\right)\\
 &\stackrel{\leq}{\scriptsize 
 \mbox{Lemmas \ref{lem:f_estim}, \ref{lem:psi_estim} }}
 K_2 \left(\psi_\vk(\alpha(y^\delta), y^\delta) + K_3 \vk(\alpha(y^\delta))\vf(\alpha(y^\delta)) +
 \vk(\alpha(y^\delta))\vf(\alpha(y^\delta))\right)\\
& \leq  
K_4 \left(\psi_\vk(\alpha(y^\delta), y^\delta) + 
 \vk(\alpha(y^\delta))\vf(\alpha(y^\delta))\right)
  \label{eq:514}\\
 &
 \stackrel{\leq}{ \mbox{ \scriptsize see \eqref{eq:psi_kappa} }}
 K_5 \left(\vf(\theta^{-1}(\delta)) +  \vk(\alpha(y^\delta))\vf(\alpha(y^\delta))\right). \label{eq:473}
\end{align}

It follows from Lemma \ref{lem:bound_y_kappa} and \eqref{eq:378} that
\[
\alpha(y^\delta)\leq K_8 \psi(\alpha(y^\delta), y^\delta)\leq K_9
\vf(\theta^{-1}(\delta))
\]
for sufficiently small $\delta>0.$
Thus, the monotonicity of $\vk$ and $\vf $ and \eqref{regvar} 
yields that the right hand side of \eqref{eq:473} does not exceed
\[
K_{10} \left(\vf(\theta^{-1}(\delta)) + \vk(\vf(\theta^{-1}(\delta)))\vf(\vf(\theta^{-1}(\delta)))\right).
\]
This proves \eqref{eq_main_estimate1}. 

The proof of \eqref{eq_main_estimate2} is 
identical to that  of \eqref{eq_main_estimate1}.
Similarly to \eqref{eq:514} we get
\be\label{eq:545}
 |\lg f, x_{\alpha_\vk(y^\delta)}^\delta-x\rg|\leq  K_{11} \left(\psi_\vk(\alpha_\vk(y^\delta), y^\delta) +  \vk(\alpha_\vk(y^\delta))\vf(\alpha_\vk(y^\delta))\right).
\ee

It follows from Lemma \ref{lem:bound_y_kappa} that 
$\alpha_\vk(y^\delta)\leq \psi_\vk(\alpha_\vk(y^\delta), y^\delta). $ The proof of the 
Theorem~\ref{thm:main} now follows from \eqref{eq:545}, \eqref{eq:539}. 
\end{proof}

\section{Case studies of noise conditions}\label{section:sufficient}
%
In order to understand \eqref{eq:main_assumption} and \eqref{eq:main_assumption_kappa}, we  
study situations, when these inequalities hold or fail; in particular for the case of random noise.

In this section, we specialize to  the case when $T$ is a compact operator, thus it allows for a  singular
system $\lambda_k, v_k, u_k,$ i.e., $\lambda_k>0$, 
$Tv_k=\lambda_k u_k, \ T^*u_k=\lambda_k v_k. $ 
Then
\eqref{eq:main_assumption} and \eqref{eq:main_assumption_kappa} can be equivalently  rephrased as 
\begin{align}
\exists C: \forall n\geq 1 \quad \lambda_n^4\sum_{k=1}^n \lambda_k^{-2}\lg y-y^\delta, u_k\rg^2
& \leq C \sum_{k=n+1}^\infty \lambda_k^2\lg y-y^\delta, u_k\rg^2  \label{eq:main_assumption1}
\intertext{and} 
\exists C: \forall n\geq 1 \quad  \lambda_n^4\sum_{k=1}^n \lambda_k^{-2}\vk^2(\lambda_k^2)\lg y-y^\delta, u_k\rg^2
&\leq C \sum_{k=n+1}^\infty \lambda_k^2 \vk^2(\lambda_k^2)\lg y-y^\delta, u_k\rg^2,
\label{eq:main_assumption1_kappa}
\end{align}
respectively.

As an example, we now assume a 
 polynomially decaying deterministic  noise, i.e., 
\begin{equation}\label{polynoise}
\lg y-y^\delta, u_k\rg^2\asymp \delta^2 k^{-\nalp}, \qquad \nalp >0.
\end{equation}
Then,  the following tables exemplify  some sufficient conditions for 
the noise condition  \eqref{eq:main_assumption1} for different degrees of ill-posedness:
\begin{center}
\begin{tabular}{cc|c|c}

\multicolumn{2}{c|}{Ill-posedness}& noise & \begin{tabular}{c} sufficient condition \\
           for  \eqref{eq:main_assumption1} \end{tabular} \\ \hline
%
mildly\rule{0mm}{5mm}&
$\lambda_k^2\asymp k^{-\beta}$ &  \eqref{polynoise} 
& 
$\nalp>1, \beta > \nalp-1,$  \\[2mm]  \hline
\begin{tabular}{c} 
severely\\ $a \in (0,1)$ \end{tabular}
& $\lambda_k^2\asymp a^k,$  &
\eqref{polynoise} 
& $\nalp>1$.
\end{tabular}
\end{center}

A similar results can be stated for the 
modified noise condition \eqref{eq:main_assumption1_kappa}: 

\begin{center}
\begin{tabular}{cc|c|c|c}

\multicolumn{2}{c|}{Ill-posedness}& noise & $\vk$&  \begin{tabular}{c} sufficient condition \\
           for  \eqref{eq:main_assumption1_kappa} \end{tabular} \\ \hline
%
mildly\rule{0mm}{5mm}&
$\lambda_k^2\asymp k^{-\beta}$ &  \eqref{polynoise}
& 
\begin{tabular}{c}
$\vk(t)\asymp t^{\gamma}$
\\  (or  $\vk^2(\lambda_k^2) \asymp k^{- 2\gamma\beta})$ \end{tabular} 
&
\begin{tabular}{c}
$\nalp>1, \gamma>0,$ \\
$\beta >2 \gamma\beta+ \nalp-1,$ \end{tabular}
\\[2mm] \hline
\begin{tabular}{c} 
severely\\ $a \in (0,1)$ \end{tabular}
& $\lambda_k^2\asymp a^k,$  &
\eqref{polynoise}
&  $\vk(t)\asymp t^{\gamma}$ &
$\nalp>1,\gamma \in (0,1)$. \\[2mm]  \hline 
\begin{tabular}{c} 
severely\\ $a \in (0,1)$ \end{tabular}
& $\lambda_k^2\asymp a^k,$ &\eqref{polynoise}
& $\vk(t)\asymp  (\log{t^{-1}})^{-\gamma}$& $\nalp>1,\gamma >0$.
\end{tabular}
\end{center}

In contrast to the deterministic case, we now investigate 
the case of random noise. 
We assume  that the noise is random and  of the form
 \be\label{eq:stochastic_noise}
y^\delta-y=\sum_{k=1}^\infty \sigma_k(\delta) \xi_k u_k,
\ee
where  $ \xi_k=\xi_k(\omega), \omega\in \Omega$ are   
independent random variables given on a probability space $(\Omega, \cF, \PP)$, with 
\begin{equation}\label{eqvar}
\E \xi_k=0,  \qquad \Var(\xi_k)=1,
\end{equation}
and analogous to \eqref{polynoise},  we assume that
\begin{equation}\label{eq:asumpt_sigma}
\sigma^2_k(\delta)\asymp \delta^2 k^{-\nalp}, \qquad \nalp>1, \end{equation} 
Note that $\E (y^\delta-y)=0$ and $\Var(y^\delta-y)\asymp \delta^2.$

The stochastic analogue of the   inequality 
\eqref{eq:main_assumption1} is of the following  form:
For almost all $\omega$ there is a constant $C=C(\omega)$ such that 
\be\label{eq:main_assumption2}
\forall n\geq 1 \qquad  \lambda_n^4\sum_{k=1}^n \lambda_k^{-2}  \sigma_k^2(\delta)\xi_k^2 
\leq C \sum_{k=n+1}^\infty \lambda_k^2 \sigma_k^2(\delta)\xi_k^2.
\ee
or
\begin{equation*}
\sup_{n\geq 1}\frac{\lambda_n^4\sum_{k=1}^n \lambda_k^{-2}  \sigma_k^2(\delta)\xi_k^2 }{  \sum_{k=n+1}^\infty \lambda_k^2 \sigma_k^2(\delta)\xi_k^2}<\infty
\ \ \mbox{almost surely}.
\end{equation*}
The stochastic analogue of \eqref{eq:main_assumption1_kappa} can be considered 
similarly with the natural modifications.

\begin{thm}\label{thm:mildly}
Assume a mildly ill-posed case, i.e., $\lambda_k^2 \asymp k^{-\beta}$, with $\beta >0$. 
Moreover, let the noise satisfy  \eqref{eq:stochastic_noise}--\eqref{eq:asumpt_sigma}, and 
assume that the random variables $\{\xi_k\}$ have moments of all orders:
\[
\forall p\geq 1\ \ \ \sup_k\E |\xi_k|^p<\infty.
\]
Then, if $\beta > \nalp -1$
\be\label{eq:main_assumption3}
\sup_{n\geq 1}\frac{\lambda_n^4\sum_{k=1}^n \lambda_k^{-2}  \sigma_k^2(\delta)\xi_k^2 }{  \sum_{k=n+1}^\infty \lambda_k^2 \sigma_k^2(\delta)\xi_k^2}<\infty
\ \ \mbox{almost surely}.
\ee
\end{thm}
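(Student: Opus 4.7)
The first step is to substitute the asymptotic equivalences $\lambda_k^2 \asymp k^{-\beta}$ and $\sigma_k^2(\delta) \asymp \delta^2 k^{-\nalp}$ into the quotient; the factor $\delta^2$ cancels, reducing the goal to
\[
\sup_{n \geq 1} \frac{n^{-2\beta} A_n}{B_n} < \infty \quad \text{almost surely}, \qquad A_n := \sum_{k=1}^n k^{\beta-\nalp} \xi_k^2, \qquad B_n := \sum_{k=n+1}^\infty k^{-\beta-\nalp}\xi_k^2.
\]
I would first verify the deterministic analogue by replacing $\xi_k^2$ with its expectation~$1$. The hypothesis $\beta > \nalp - 1$ forces $\beta - \nalp + 1 > 0$, so $\sum_{k=1}^n k^{\beta - \nalp} \asymp n^{\beta - \nalp + 1}$; since $\nalp > 1$ and $\beta > 0$, the tail $\sum_{k>n} k^{-\beta-\nalp} \asymp n^{1-\beta-\nalp}$. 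Multiplying the numerator by $n^{-2\beta}$ produces the same order $n^{1-\beta-\nalp}$, so the deterministic ratio is uniformly bounded in $n$.

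The second step upgrades this to the almost sure statement by proving $A_n/\E A_n \to 1$ and $B_n/\E B_n \to 1$ a.s. Writing $X_k := k^{\beta-\nalp}(\xi_k^2 - 1)$, the summands are independent, mean zero, and by hypothesis have uniformly bounded moments of all orders; Rosenthal's inequality then yields, for every integer $p \geq 1$,
\[
\E|A_n - \E A_n|^{2p} \leq C_p \Bigl(\sum_{k=1}^n k^{2(\beta - \nalp)}\Bigr)^{p} + C_p \sum_{k=1}^n k^{2p(\beta-\nalp)}.
\]
A direct comparison with $(\E A_n)^{2p} \asymp n^{2p(\beta-\nalp+1)}$ shows the relative fluctuation decays at least as $n^{-p}$ (and even faster when $2(\beta-\nalp) \leq -1$); choosing $p$ large and invoking Markov together with Borel--Cantelli gives $A_n/\E A_n \to 1$ a.s. An analogous computation applied to the absolutely convergent tail $B_n$ gives $B_n/\E B_n \to 1$ a.s. Combined with the deterministic step, the target ratio is uniformly bounded for all sufficiently large $n$; the contribution of the finitely many remaining indices is a.s.\ finite because $B_n > 0$ a.s.\ for every $n$, which follows from Paley--Zygmund applied to each $\xi_k^2$ together with the second Borel--Cantelli lemma.

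The principal obstacle is the uniform-in-$n$ almost sure lower bound on the tail sum $B_n$: a priori it could collapse at some problematic $n$. The Rosenthal bound for $B_n$ is precisely what rules this out, yielding tail probabilities $\PP(B_n < \tfrac{1}{2} \E B_n) \leq C_p n^{-p}$ for arbitrary $p$, so a direct Borel--Cantelli argument suffices without any dyadic subsequence. Beyond this, only routine summation asymptotics and moment computations are required.
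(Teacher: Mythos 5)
Your proposal is correct, and its skeleton matches the paper's: the same reduction via $\lambda_k^2\asymp k^{-\beta}$, $\sigma_k^2(\delta)\asymp\delta^2 k^{-\nalp}$ to the ratio of $n^{-2\beta}\sum_{k\le n}k^{\beta-\nalp}\xi_k^2$ to $\sum_{k>n}k^{-\beta-\nalp}\xi_k^2$, the same centering $\eta_k=\xi_k^2-1$, the same deterministic leading orders (both of order $n^{1-\beta-\nalp}$, using $\beta-\nalp+1>0$ and $\beta+\nalp>1$), and the same escalation to moments of arbitrarily high order to cover the full range $\beta>\nalp-1$. Where you genuinely differ is the concentration device. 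The paper proves the a.s.\ negligibility of the fluctuation terms \eqref{eq:828} and \eqref{eq:833} through its Lemma~\ref{lem:almost_sure} ($\E Y_n\to0$ and $\sum_n\Var Y_n<\infty$ imply $Y_n\to0$ a.s.), applied to the even powers $Y_n^{(m)}=n^{-2m(\beta-\nalp+1)}\bigl(\sum_{k\le n}k^{\beta-\nalp}\eta_k\bigr)^{2m}$, with $\Var Y_n^{(m)}$ estimated by an explicit multinomial expansion; second moments alone only reach $\beta>\nalp-\frac34$, which is why $m$ must grow. You replace this hand computation by Rosenthal's inequality for $\E|A_n-\E A_n|^{2p}$ followed by Markov and Borel--Cantelli---the same higher-moment mechanism, but packaged in a standard inequality, which buys a cleaner argument free of combinatorial bookkeeping at the price of citing a nontrivial external result. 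You are also more careful than the paper on two genuine points: the uniform a.s.\ lower bound on the tail sum for large $n$ (your $\PP(B_n<\tfrac12\E B_n)\le C_p n^{-p}$) and the strict positivity $B_n>0$ a.s.\ at the finitely many remaining indices (Paley--Zygmund plus the second Borel--Cantelli lemma), both of which the paper leaves implicit.

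Two small repairs are needed, neither fatal. First, your parenthetical that the relative fluctuation decays ``even faster'' when $2(\beta-\nalp)\le-1$ is backwards: in that regime the first Rosenthal term contributes at most $(\log n)^p$ in absolute terms, but relative to $(\E A_n)^{2p}\asymp n^{2p(\beta-\nalp+1)}$ this gives the rate $(\log n)^p\,n^{-2p(\beta-\nalp+1)}$, and since $\beta-\nalp+1\le\tfrac12$ there, this is \emph{slower} than $n^{-p}$. The argument survives because $2p(\beta-\nalp+1)\to\infty$ as $p\to\infty$ for any fixed $\beta>\nalp-1$, so summable tail probabilities are still available for $p$ large enough, but the stated uniform rate $n^{-p}$ only holds when $2(\beta-\nalp)>-1$. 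Second, Borel--Cantelli at a fixed threshold $\ve$ yields $\limsup_n|A_n/\E A_n-1|\le\ve$ a.s.; to conclude $A_n/\E A_n\to1$ take $\ve=1/m$ and intersect over $m$---routine, but worth stating, as is the limiting argument needed to apply Rosenthal to the infinite series $B_n$ (harmless, since $\beta+\nalp>1$ gives absolute convergence with all moments).
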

The proof of this theorem is given below.
The assumptions on $\{\xi_k\}$ hold in particular for 
independent Gaussian $N(0,1)$-random variables. 
Thus, for the mildly ill-posed operators, the stochastic case is completely similar to the 
deterministic one and the analogous convergence rates results hold true (almost surely).

This, however, is not true for the severely ill-posed case as the following theorem shows. 
\begin{thm}\label{thm:severe}
Assume a severely ill-posed case, i.e., $\lambda_k^2 \asymp a^{k}$, with $a \in (0,1)$ and 
let 
\eqref{eq:stochastic_noise} and \eqref{eq:asumpt_sigma} hold, where 
 $\{\xi_k\}$ are  independent Gaussian $N(0,1)$ random variables.
Then
\be\label{eq:main_assumption3_ex1}
\PP\left(\sup_{n\geq 1} \frac{\lambda_n^4\sum_{k=1}^n 
\lambda_k^{-2}  \sigma_k^2(\delta)\xi_k^2 }{  \sum_{k=n+1}^\infty \lambda_k^2 
\sigma_k^2(\delta)\xi_k^2}=\infty\right)=1.
%
\ee
\end{thm}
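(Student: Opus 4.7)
The plan is to reduce the ratio inside the supremum to a simpler quantity that depends only on the Gaussians $\xi_k$, and then to show that this quantity is almost surely unbounded via an ergodic argument.

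First, I would extract the dominant term in the numerator. Since $\lambda_k^2\asymp a^k$ and $\sigma_k^2(\delta)\asymp \delta^2 k^{-\nalp}$, retaining only the $k=n$ summand gives
\[
\lambda_n^4\sum_{k=1}^n \lambda_k^{-2}\sigma_k^2(\delta)\xi_k^2 \ \geq\ c_1\,\lambda_n^2\sigma_n^2(\delta)\,\xi_n^2 \ \asymp\ c_1\,\delta^2 a^n n^{-\nalp}\,\xi_n^2.
\]
For the denominator, writing $k=n+j$ with $j\geq 1$ and using $(n+j)^{-\nalp}\leq n^{-\nalp}$ (valid since $\nalp>0$),
\[
\sum_{k=n+1}^\infty \lambda_k^2\sigma_k^2(\delta)\xi_k^2 \ \leq\ C_2\,\delta^2 a^n n^{-\nalp}\,Y_n,\qquad Y_n:=\sum_{j=1}^\infty a^j \xi_{n+j}^2.
\]
Combining these two bounds, the ratio in \eqref{eq:main_assumption3_ex1} is bounded below by $c_3\,\xi_n^2/Y_n$, so it suffices to establish that $\sup_{n\geq 1}\xi_n^2/Y_n=\infty$ almost surely.

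Next, I would invoke ergodicity. Define $\phi_n:=\xi_n^2/Y_n$. Then $\phi_n = f(\xi_n,\xi_{n+1},\dots)$ for a fixed measurable functional $f$, so $(\phi_n)$ is stationary under the shift on the i.i.d.\ sequence $(\xi_k)$; in particular the shift is ergodic. Birkhoff's ergodic theorem then yields
\[
\frac{1}{N}\sum_{n=1}^N \mathbf{1}_{\phi_n>M}\ \longrightarrow\ \PP(\phi_1>M)\quad\text{a.s.}
\]
One checks $\PP(\phi_1>M)>0$ for every $M>0$: since $\E Y_1=a/(1-a)<\infty$ we have $Y_1<\infty$ a.s., so one can fix $T>0$ with $\PP(Y_1\leq T)>0$, and by independence of $\xi_1$ and $Y_1$,
\[
\PP(\phi_1>M)\ \geq\ \PP(\xi_1^2>MT)\,\PP(Y_1\leq T)\ >\ 0.
\]
Consequently $\{n:\phi_n>M\}$ has positive asymptotic density and hence is infinite a.s.; intersecting over $M\in\mathbb{N}$ (a countable union of null sets) gives $\sup_n\phi_n=\infty$ a.s., which proves \eqref{eq:main_assumption3_ex1}.

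\textbf{Main obstacle.} The key difficulty is that the events $\{\xi_n^2>MY_n\}$ for different $n$ are strongly dependent through the shared tail $\xi_{n+1},\xi_{n+2},\dots$, so the second Borel--Cantelli lemma does not apply directly. The ergodic-theoretic viewpoint elegantly circumvents this by exploiting the stationarity of $(\phi_n)$ as a measurable functional of the shift-ergodic i.i.d.\ sequence $(\xi_k)$. An alternative route would be to combine Kolmogorov's $0$--$1$ law (the event $\{\sup_n\phi_n=\infty\}$ is a tail event with respect to $(\xi_k)$) with a suitable lower bound on its probability via truncation of the tail of $Y_n$ to decorrelate a sparse subsequence.
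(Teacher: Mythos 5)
Your proposal is correct, and it shares the paper's opening reduction but then diverges in the key mechanism. Both arguments start the same way: keep only the single summand $k=n$ in the numerator and bound the denominator's tail using $(n+j)^{-\nalp}\leq n^{-\nalp}$ together with the geometric decay, which reduces the whole problem to showing $\sup_n \phi_n=\infty$ a.s.\ for $\phi_n=\xi_n^2/\sum_{j\geq 1}a^j\xi_{n+j}^2$. From there, the paper is quantitative: it evaluates the ratio at the stopping time $\tau_p=\inf\{n\geq 1:\xi_n^2>p\}$ (finite a.s.\ by unboundedness of the Gaussians), uses the strong Markov property to see that $\E\,\xi_{k+\tau_p}^2=1$, and applies Markov/Chebyshev to the post-$\tau_p$ series to obtain the explicit estimate $\PP\left(\sup_n\phi_n\geq c\sqrt{p}\right)\geq 1-\left((1-a)p\right)^{-1/2}$, letting $p\to\infty$. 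Your route replaces this with a soft ergodic argument: $\phi_n$ is a fixed measurable functional of the shifted i.i.d.\ sequence, hence stationary with the ergodic shift, and Birkhoff's theorem combined with $\PP(\phi_1>M)>0$ (which you correctly justify via the independence of $\xi_1$ and $Y_1$, the a.s.\ finiteness of $Y_1$, and the unbounded support of $\xi_1^2$) yields that $\{n:\phi_n>M\}$ has positive asymptotic density for every $M$, whence $\limsup_n\phi_n=\infty$ a.s.\ after a countable intersection. Each approach buys something: the paper's stopping-time argument gives an explicit probability rate in $p$ and, as the paper notes after its proof, uses only independence, $\sup_k\E\xi_k^2<\infty$, and $\limsup_k|\xi_k|=\infty$ a.s.\ --- no identical distribution is required, which is what powers the extension in Remark~\ref{rem1} to bounded noises with $0$ in the support; your ergodic argument genuinely needs the i.i.d.\ structure for stationarity, but it is shorter, avoids stopping times and the dependence issue you flag (which is exactly what the strong Markov property handles in the paper), and delivers the slightly stronger conclusion that exceedances of every level occur with positive density rather than merely once. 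The only point worth making explicit in a final write-up is that $Y_n\in(0,\infty)$ a.s.\ simultaneously for all $n$ (positive terms, finite mean), so that $\phi_n$ is well defined; your alternative suggestion via Kolmogorov's $0$--$1$ law is also viable since $\limsup_n\phi_n$ is tail-measurable, but the Birkhoff argument as you give it is already complete.
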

In particular, in this situation, the noise condition \eqref{eq:main_assumption1} fails almost surely. 
This shows that  the difference between stochastic and deterministic cases may be very essential.

\begin{proof}[Proof of Theorem~\ref{thm:severe}.]

Introduce the Markov moment $\tau_p:=\inf\{ n\geq 1\ : \ \xi_n^2>p\}.$ Obviously,
$\tau_p<\infty$ almost surely.
Then 
\[
a^{2\tau_p}\sum_{k=1}^{\tau_p}  a^{-k}  k^{-\nalp} \xi_k^2\geq a^{2\tau_p}   a^{-{\tau_p}}  {\tau_p}^{-\nalp} p= a^{\tau_p}  {\tau_p}^{-\nalp}p.
\]
We have 
\[
\sum_{k=\tau_p+1}^\infty a^k  k^{-\nalp} \xi_k^2= a^{\tau_p} \sum_{k=1}^\infty a^k  (k+{\tau_p})^{-\nalp} \xi_{k+{\tau_p}}^2
\leq a^{\tau_p} {\tau_p}^{-\nalp}\sum_{k=1}^\infty a^k    \xi_{k+{\tau_p}}^2.
\]
Since $\tau_p$ is a finite  Markov moment, 
\[
\E \xi_{k+{\tau_p}}^2 =\sum_{n\geq 0} \E(\1_{\tau_p=n} \xi_{k+n}^2)=
\sum_{n\geq 0} \E \1_{\tau_p=n} \E\xi_{k+n}^2  = \sum_{n\geq 0} \E\1_{\tau_p=n} =1.
\]
Hence $\E\sum_{k=1}^\infty a^k    \xi_{k+{\tau_p}}^2= \sum_{k=1}^\infty a^k=(1-a)^{-1}.$
By Chebyshev's inequality we have
\[
\PP( \sum_{k=1}^\infty a^k    \xi_{k+{\tau_p}}^2\geq \sqrt{p})\leq ((1-a)p)^{-1/2}.
\] 
Therefore for any $p\geq 1$
\begin{align*}
& \PP\left(\exists{n\geq 1}\ \ \ \frac{a^{2n}\sum_{k=1}^n  a^{-k}  k^{-\nalp} \xi_k^2 }{  \sum_{k=n+1}^\infty a^k  k^{-\nalp} \xi_k^2}
 \geq \sqrt{p}\right)\\
 &
\PP\left(  \frac{ a^{\tau_p} {\tau_p}^{-\nalp}p}{a^{\tau_p} {\tau_p}^{-\nalp}{\sum_{k=1}^\infty a^k    \xi_{k+{\tau_p}}^2}}\geq \sqrt{p}
 \right)\\
 &=
\PP\left(  \frac{  p}{ {\sum_{k=1}^\infty a^k    \xi_{k+{\tau_p}}^2}}\geq \sqrt{p}
 \right) = \PP\left(   { {\sum_{k=1}^\infty a^k    \xi_{k+{\tau_p}}^2}}\leq \sqrt{p}
 \right) \geq 1-{((1-a)p)}^{-1/2}.
\end{align*}
This yields \eqref{eq:main_assumption3_ex1}.
\end{proof}

In this proof  we used the following properties of the sequence $\{\xi_k\}$:
\[ 
\mbox{ i)  independence,}  \qquad 
\mbox{ ii) } \sup_{k}\E\xi_k^2<\infty, \qquad 
\mbox{ iii) } \limsup_{k\to\infty}|\xi_k|=+\infty  \mbox{ a.s.}  
\]

\begin{remk}\label{rem1}
It may be conjectured that if $\{\xi_k\}$ are uniformly bounded random variables, 
for example, if $\{\xi_k\}$ have the uniform distribution on $[-1,1]$, then
\eqref{eq:main_assumption3} would hold. 
However, this conjecture is wrong. Problems may arise if $\{\xi_k\}$ are i.i.d.
and  0 belongs to the support of the $\xi_k$'s distribution, i.e., if  $\PP(|\xi_k|<\ve)>0$ for any $\ve>0.$

Indeed, let $m,p\geq 1$ be fixed. Select  $c>0$   such that $\PP(|\xi_k|>c)>0.$ 
Set 
\begin{align*}
\tau_{mp}&:=\inf\bigg \{n\geq 1\ : \ \ |\xi_{n-p}|>c, |\xi_{n-m+1}|<p^{-1},
 |\xi_{n-m+2}|<p^{-1},\ldots \\
 & \qquad \qquad \qquad \ldots, |\xi_{n-1}|<p^{-1}, |\xi_{n}|<p^{-1}\bigg\}
\end{align*}
Since $\PP(|\xi_k|>c)>0 $ and $\PP(|\xi_k|<p^{-1})>0$, the 
random variable $\tau_{mp} $ is finite almost surely.

Similarly to the reasoning above we get the 
 inequalities \allowdisplaybreaks
\begin{align*}
a^{2(\tau_{mp}-m) }\sum_{k=1}^{\tau_{mp}-m}  a^{-k}  k^{-\nalp} \xi_k^2&\geq a^{2\tau_{mp}-m} 
  a^{-{\tau_{mp}-m}}  {\tau_{mp}}^{-\nalp} c= a^{\tau_{mp}}  ({\tau_{mp}}-m)^{-\nalp}c,
\\
\sum_{k=\tau_{mp}-m+1}^\infty a^k  k^{-\nalp} \xi_k^2&\leq
 a^{\tau_{mp}-m} ({\tau_{mp}-m})^{-\nalp}(m/p+a^m\sum_{k=1}^\infty a^k    \xi_{k+{\tau_{mp}}}^2).
\end{align*}
Chose $m\in(-\frac{\log p}{\log a}, \sqrt{p}/2 )$, i.e., $m/p< 1/(2\sqrt{p})$ and $a^m<1/p$. Then  
\begin{align*}
 &\PP\left(\exists{n\geq 1}\ \ \ \frac{a^{2n}\sum_{k=1}^n  a^{-k}  k^{-\nalp} \xi_k^2 }{  \sum_{k=n+1}^\infty a^k  k^{-\nalp} \xi_k^2}
 \geq c\sqrt{p}\right)\\
 &\geq 
 \PP\left(  \frac{a^{\tau_{mp}}  ({\tau_{mp}}-m)^{-\nalp}c}{  a^{\tau_{mp}-m} ({\tau_{mp}-m})^{-\nalp}(m/p+a^m\sum_{k=1}^\infty a^k    \xi_{k+{\tau_{mp}}}^2)} 
 \geq c\sqrt{p}\right)\\
 &\geq 
 \PP\left(  {  1/(2\sqrt{p})+1/p\sum_{k=1}^\infty a^k    \xi_{k+{\tau_{mp}}}^2 }  
 \leq 1/\sqrt{p}\right)\\
 &\geq \PP\left(  {    \sum_{k=1}^\infty a^k    \xi_{k+{\tau_{mp}}}^2 }  
 \leq  \sqrt{p}/2\right)\to 1, \quad \mbox{ as } p\to\infty.
\end{align*}
and we  again obtain \eqref{eq:main_assumption3_ex1}, the failure of the noise condition.

The conclusion from the above reasoning is that  if $\{\xi_k\}$ are i.i.d. and 
$
\lambda_k^2\asymp a^k$ where $a\in(0,1),$ then assumption 
\eqref{eq:main_assumption3} is true if  $\PP(|\xi_k|\in [\ve, \ve^{-1}])=1$ for
some $\ve>0,$ that is, the support of $\xi_k$
is separated from 0 and $\infty.$
The sufficiency follows from the deterministic statement.
\end{remk}


To prove the positive results in the mildly ill-posed case, 
we need the following known result. 
\begin{lem}\label{lem:almost_sure}
Assume that random variables $\{Y_n\}$ have the finite second moment and
\[
\lim_{n\to\infty} \E Y_n=0,\ \ \sum_{n=1}^\infty \Var{Y_n}<\infty.
\]
Then 
\be\label{eq1020}
Y_n\to0,  \mbox{ as }  n\to \infty \ \mbox{ almost surely.}
\ee
\end{lem}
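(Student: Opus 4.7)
The plan is to split $Y_n$ into its mean and its centered part, handle the mean by the hypothesis, and handle the centered part via the standard Borel--Cantelli/Chebyshev machinery using the summability of variances.

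More concretely, I would write $Y_n = \E Y_n + Z_n$ where $Z_n := Y_n - \E Y_n$ has zero mean and $\Var Z_n = \Var Y_n$. Since $\E Y_n \to 0$ by hypothesis, it suffices to prove that $Z_n \to 0$ almost surely. For this, I would apply Chebyshev's inequality to obtain, for every $\varepsilon > 0$,
\[
\PP(|Z_n| > \varepsilon) \;\leq\; \frac{\Var Y_n}{\varepsilon^2}.
\]
Since $\sum_n \Var Y_n < \infty$ by assumption, the right-hand side is summable over $n$, so the Borel--Cantelli lemma gives $\PP(|Z_n| > \varepsilon \text{ infinitely often}) = 0$. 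Taking $\varepsilon = 1/k$ for $k \in \N$ and forming the countable intersection of the corresponding full-measure events shows that $Z_n \to 0$ almost surely, and combining with $\E Y_n \to 0$ yields \eqref{eq1020}.

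An equivalent route, which is perhaps even cleaner, is to note that by Tonelli's theorem
\[
\E \sum_{n=1}^{\infty} Z_n^2 \;=\; \sum_{n=1}^{\infty} \E Z_n^2 \;=\; \sum_{n=1}^{\infty} \Var Y_n \;<\; \infty,
\]
so $\sum_n Z_n^2 < \infty$ almost surely, which directly forces $Z_n \to 0$ a.s. Either way, the argument is short and does not require any additional structural hypothesis on the $Y_n$ beyond what is stated (no independence, no identical distribution).

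There is no real obstacle: this is a textbook consequence of Chebyshev plus Borel--Cantelli (or Tonelli). The only mild point to be careful about is that the almost-sure event on which $Z_n \to 0$ must be chosen independently of $\varepsilon$, which is handled by intersecting the null sets over a countable sequence $\varepsilon = 1/k$.
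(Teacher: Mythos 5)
Your proposal is correct, and your second route (Tonelli/monotonicity giving $\E \sum_n (Y_n - \E Y_n)^2 = \sum_n \Var Y_n < \infty$, hence $\sum_n Z_n^2 < \infty$ a.s.\ and so $Z_n \to 0$ a.s., combined with $\E Y_n \to 0$) is exactly the paper's proof. The Chebyshev--Borel--Cantelli variant you give first is an equally valid standard alternative, so there is nothing to fix.
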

\begin{proof}
Indeed,
\[
\E \sum_n(Y_n-\E Y_n)^2= \sum_n\E(Y_n-\E Y_n)^2=\sum_n\Var Y_n<\infty.
\]
So, $Y_n-\E Y_n\to0$ as $n\to\infty$ almost surely, and   we get \eqref{eq1020} because  $\lim_{n\to\infty} \E Y_n=0.$
\end{proof}

\begin{proof}[Proof of Theorem~\ref{thm:mildly}]
We show \eqref{eq:main_assumption3} if  $\beta>\nalp-1$.
In particular, \eqref{eq:main_assumption1} is a particular case of 
\eqref{eq:main_assumption3} if $\PP(\xi_k=\pm1)=1/2.$

To prove  \eqref{eq:main_assumption3}, it suffices to verify that
\be\label{eq:main_assumption_ex2}
\PP\left(
\sup_{n\geq 1}\frac{ n^{-2\beta}\sum_{k=1}^n  k^{\beta}   k^{-\nalp} \xi_k^2 }{  \sum_{k=n+1}^\infty  k^{-\beta}  k^{-\nalp} \xi_k^2}<\infty\right)=
\PP\left(
\sup_{n\geq 1}\frac{ n^{-2\beta}\sum_{k=1}^n  k^{\beta-\nalp} \xi_k^2 }{  \sum_{k=n+1}^\infty  k^{-\beta-\nalp} \xi_k^2}<\infty\right)=1.
\ee

Set $\eta_k=\xi_k^2-1.$ Recall that $\E \eta_k=0.$
We have 
\begin{align*} 
&n^{-2\beta}\sum_{k=1}^n  k^{\beta-\nalp} \xi_k^2 
=n^{-2\beta}\sum_{k=1}^n  k^{\beta-\nalp} (1+(\xi_k^2-1))\\
&\quad =
n^{-2\beta}\sum_{k=1}^n  k^{\beta-\nalp} + 
n^{-2\beta}\sum_{k=1}^n  k^{\beta-\nalp}\eta_k
  = \frac{ n^{-\beta-\nalp+1}}{\beta-\nalp+1} (1+o(1))+ 
n^{-2\beta}\sum_{k=1}^n  k^{\beta-\nalp}\eta_k.
\end{align*}
\begin{align*}
&\sum_{k=n+1}^\infty  k^{-\beta-\nalp} + \sum_{k=n+1}^\infty  k^{-\beta-\nalp}\eta_k
 =
   \frac{n^{-\beta-\nalp+1}}{-\beta-\nalp+1} (1+o(1))
+ \sum_{k=n+1}^\infty  k^{-\beta-\nalp}\eta_k.
\end{align*}

So, equation  \eqref{eq:main_assumption_ex2} will be verified  if we prove that
\be\label{eq:828}
n^{ \beta+\nalp-1}n^{-2\beta}\sum_{k=1}^n  k^{\beta-\nalp}\eta_k=
 n^{ -\beta+\nalp-1} \sum_{k=1}^n  k^{\beta-\nalp}\eta_k \to 0 \quad  \text{ as }  n\to\infty \ \ \mbox{almost surely}
\ee
and 
\be\label{eq:833}
n^{ \beta+\nalp-1}\sum_{k=n+1}^\infty  k^{-\beta-\nalp}\eta_k\to 0 \quad  \text{ as } n\to\infty \ \ \mbox{almost surely}.
\ee

Consider \eqref{eq:833}. Set 
$Y_n:=\left(n^{ \beta+\nalp-1}\sum_{k=n+1}^\infty  k^{-\beta-\nalp}\eta_k\right)^2$.
Since $\E \eta_k=0,$ we have
\begin{align*} 
\E Y_n&= \E\left(n^{ \beta+\nalp-1}\sum_{k=n+1}^\infty  k^{-\beta-\nalp}\eta_k\right)^2
=\Var\left(n^{ \beta+\nalp-1}\sum_{k=n+1}^\infty  k^{-\beta-\nalp}\eta_k\right) \\
&= n^{ 2\beta+2\nalp-2}\sum_{k=n+1}^\infty \Var (k^{-\beta-\nalp}\eta_k) =
 n^{ 2\beta+2\nalp-2}\sum_{k=n+1}^\infty k^{-2\beta-2\nalp} \Var (\eta_k)\\
 &\leq
C_1 n^{ 2\beta+2\nalp-2}\sum_{k=n+1}^\infty k^{-2\beta-2\nalp} \
\leq C_2 n^{ 2\beta+2\nalp-2}n^{-2\beta-2\nalp+1} \\
&= C_2 n^{-1} \to0,\ \mbox{ as } n\to\infty.
\end{align*} \allowdisplaybreaks
\begin{align*}
&\Var Y_n= \Var\left[\left(n^{ \beta+\nalp-1}\sum_{k=n+1}^\infty  k^{-\beta-\nalp}\eta_k\right)^2\right]\\
&=n^{ 4\beta+4\nalp-4}\Var\left[\sum_{k=n+1}^\infty  k^{-2\beta-2\nalp}\eta_k^2 + 
2\sum_{n+1\leq i <j}i^{-\beta-\nalp}j^{-\beta-\nalp}\eta_i\eta_j  \right]\\
& \leq 
n^{ 4\beta+4\nalp-4}\Var\left[\sum_{k=n+1}^\infty  k^{-2\beta-2\nalp}(\eta_k^2-\E \eta_k) + 
2\sum_{n+1\leq i <j}i^{-\beta-\nalp}j^{-\beta-\nalp}\eta_i\eta_j  \right]\\
&=
n^{ 4\beta+4\nalp-4}\E\left[\sum_{k=n+1}^\infty  k^{-2\beta-2\nalp}(\eta_k^2-\E \eta_k) + 
2\sum_{n+1\leq i <j}i^{-\beta-\nalp}j^{-\beta-\nalp}\eta_i\eta_j  \right]^2\\
&\leq
2n^{ 4\beta+4\nalp-4}\left(
\E\left[\sum_{k=n+1}^\infty  k^{-2\beta-2\nalp}(\eta_k^2-\E \eta_k^2)\right]^2 + 
\E\left[2\sum_{n+1\leq i <j}i^{-\beta-\nalp}j^{-\beta-\nalp}\eta_i\eta_j  \right]^2
\right).
\end{align*}
If we expand the brackets in the last sum, then
 the expectation $\E (\eta_i\eta_j\eta_{i_1}\eta_{j_1})$ is equal to  zero if $(i,j)\neq (i_1,j_1)$ and $(i,j)\neq (j_1,i_1)$.
Thus the right hand side of the last expression equals
\begin{align*}
& n^{ 4\beta+4\nalp-4}\left(
\Var\left[\sum_{k=n+1}^\infty  k^{-2\beta-2\nalp}(\eta_k^2-\E \eta_k^2)\right] + 
 8\sum_{n+1\leq i<j}\E\left[i^{-\beta-\nalp}\eta_i\right]^2\E\left[j^{-\beta-\nalp}\eta_j  \right]^2
\right)\\
&=
   n^{ 4\beta+4\nalp-4}\Bigg(
\sum_{k=n+1}^\infty  k^{-4\beta-4\nalp}\Var\left[(\eta_k^2-\E \eta_k^2)\right] \\
& \qquad \qquad \qquad \qquad \qquad \qquad + 
  8\sum_{n+1\leq i<j} \Var\left[i^{-\beta-\nalp}\eta_i\right] \Var\left[j^{-\beta-\nalp} \eta_j  \right]
\Bigg)\\
&\leq
C_3   n^{ 4\beta+4\nalp-4}\left(
\sum_{k=n+1}^\infty  k^{-4\beta-4\nalp}+
  \sum_{n+1\leq i<j}  i^{-2\beta-2\nalp}\Var(\eta_i)  j^{-2\beta-2\nalp} \Var(\eta_j) 
\right)\\
&\leq
C_4   n^{ 4\beta+4\nalp-4}\left(
n^{-4\beta-4\nalp+1}+
   (\sum_{n+1\leq i }  i^{-2\beta-2\nalp})^2 \right)\\
   &\leq
   C_5   n^{ 4\beta+4\nalp-4}\left(
n^{-4\beta-4\nalp+1}+
   (n^{-2\beta-2\nalp+1})^2 \right)\leq C_6  n^{ 4\beta+4\nalp-4}  n^{-4\beta-4\nalp+2}=C_6  n^{-2}.
\end{align*}
This proves \eqref{eq:833}.

Consider \eqref{eq:828}.
Set $Y_n:=n^{ -2\beta+2\nalp-2} (\sum_{k=1}^n  k^{\beta-\nalp}\eta_k )^2$
in Lemma \ref{lem:almost_sure}. Similarly to the above calculations we get $\lim_{n\to\infty}\E Y_n=0$ and
\begin{equation}
\begin{split} 
\Var(Y_n)&
=n^{ -4\beta+4\nalp-4}\; O\left(  \sum_{k=1}^n  k^{4\beta-4\nalp} + 
(\sum_{k=1}^n  k^{2\beta-2\nalp})^2 \right)\\
\label{eq:1016}
&=n^{ -4\beta+4\nalp-4}\; O\left( \left( \sum_{k=1}^n  k^{2\beta-2\nalp}\right)^2\right).
\end{split}
\end{equation}
In contrast to (convergent) sums of the form $\sum_{k=n+1}^\infty  k^{-\theta}\asymp n^{-\theta+1}$, the asymptotic
of $\sum_{k=1}^n  k^{-\theta}$ is different:
\[
\sum_{k=1}^n  k^{-\theta}\asymp
\begin{cases}
 n^{-\theta+1}, & \theta<1;\\
 \log n, & \theta =1;\\
 1=n^0, & \theta>1.
\end{cases}
=
\begin{cases}
 n^{(-\theta+1)\vee 0}, & \theta\neq 1;\\
 \log n, & \theta =1. 
 \end{cases}
\]
That's why, we have to be careful in \eqref{eq:1016}. In any case,
$\lim_{n\to\infty}  \Var(Y_n)=0$ and the series $\sum_n \Var(Y_n) $ is convergent if $-4\beta+4\nalp-4<-1$ or $\beta> \nalp-1+\frac14=\nalp-\frac34.$
Thus, we  have already proved  \eqref{eq:828} for $\beta>  \nalp-\frac34$. To verify \eqref{eq:828} for
$\beta>  \nalp-1$,
we have to consider moments of higher orders.

Considering 
$Y_n^{(m)}:=n^{ -2m\beta+2m\nalp-2m} (\sum_{k=1}^n  k^{\beta-\nalp}\eta_k )^{2m}$ and performing
similar calculation as above we get
\begin{align*}
\Var(Y_n^{(m)})=n^{ -4m\beta+4m\nalp-4m}\; O\bigg(& \sum_{k_1+...+k_p=2m,\ k_i\geq 2} \
\sum_{i_1=1}^n  i_2^{k_1(\beta- \nalp)}\sum_{i_2=1}^n  i_2^{k_2(\beta- \nalp)}\ldots \\
&  \qquad \qquad   \ldots \sum_{i_p=1}^n  i_p^{k_p(\beta- \nalp)}\bigg).
\end {align*}
It can be seen that
$\sum_n \Var{Y_n^{(m)}}<\infty$ if $-4m\beta+4m\nalp-4m>-1$ or $\beta> \nalp-1+\frac1m$. So, 
we have \eqref{eq:828} for  $\beta>  \nalp-1+\frac1m.$
Since $m\geq 1$ is arbitrary, this yields  \eqref{eq:828} for  $\beta>  \nalp-1.$

\end{proof}

\begin{remk}
It is interesting that the Muckenhoupt-type condition fails for a typical random noise in the case of 
severely ill-posed problems. This observation, however, is in line with numerical investigation 
on the performance of heuristic rules done, for instance, by H\"amarik, Palm, and Raus 
\cite{HaPaRe11}, in particular in \cite{Palm10}. Typically, for mildly ill-posed problems, 
 the quasi-optimality principle is amongst  the most efficient heuristic rules. However, for the   
backward heat equation (which is severely ill-posed), it performs worse 
compared to competitors such as the Hanke-Raus rules which by our results can be understood 
as caused by the failure of the noise condition.  Note that the convergence theory 
for the latter rules is based on a weaker Muckenhoupt-type condition which might not suffer 
from the negative result in Theorem~\ref{thm:severe}. Thus, the restricted noise analysis 
clearly reveals the behaviour of heuristic rules, which was quite mysterious for a long time.  
\end{remk}



\section{The quasi-optimality criterion in the aggregation of the regularized approximants:
numerical illustration}\label{section:numerical}


In this section, we illustrate how the quasi-optimality criterion can be used in the aggregation of the
regularized approximants by means of the linear functional strategy. Recall that the idea of such an aggregation
is to approximate the best linear combination
$$
  x_{ \tm{agg} }^{s}  = \sum_{j=1}^s c_j^s x_{\alp_j}^{\dlt}
$$
of the constructed regularized approximants $x_{\alp_j}^{\dlt}$ of $x$, where ``best'' means that
$x_{ \tm{agg} }^{s}$ solves the minimization problem
$$
  \norm{  x - x_{ \tm{agg} }^{s}  } = \min\limits_{c_j}
  \norm{   x - \sum_{j=1}^s c_j x_{\alp_j}^{\dlt}   }.
$$
%


It is clear that the vector $\cv^s = \kl{   c_1^s,c_2^s, \ldots, c_s^s   }  \in\R^s  $ satisfies the system of linear equations
$\GGm \cv = \pv$ with the Gram matrix $  \GGm = \kl{   \inner{  x_{\alp_i}^{\dlt},  x_{\alp_j}^{\dlt}  }:\; 
i,j = 1,2,\ldots,s      }  $
and the vector $  \pv = \kl{   \inner{  x,  x_{\alp_i}^{\dlt}  }  :\; i = 1,2,\ldots,s      }  $.
Since $x_{\alp_j}^{\dlt}$, $j=1,2,\ldots,s$, are already found, the matrix $\GGm$ can be computed and the calculation
of the inverse matrix $\GGm^{-1}$ can be controlled. However, the vector $\pv$ involves the unknown solution $x$,
and therefore, the system $\GGm \cv = \pv$ cannot be solved directly.

At the same time, each component $\inner{  x,  x_{\alp_i}^{\dlt}  }$ of the vector $\pv$ is a value of a bounded
linear functional $x_{\alp_i}^{\dlt}$, and the linear functional strategy allows us to estimate $\inner{  x,  x_{\alp_i}^{\dlt}  }$,
$i=1,2,\ldots,s$, more accurately than $x$ in $\norm{ \cdot }$. For example, if $ x\in \Ran \kl{   \vph\kl{   T^* T   }   }  $
and $  x_{\alp}^{\dlt} = \kl{  \alp I + T^* T  }^{-1} T^* y^{\dlt}  $, then under the conditions of Theorem~\ref{thm:main}, we have
\begin{equation}\label{n5:1}
   \norm{  x  -  x_{ \alp\kl{  y^{\dlt}  }  }^{\dlt}  }   =
  O\kl{\:   \vph\kl{  \vph\kl{  \tht^{-1} (\dlt)  }  }     \:},
\end{equation}
while for each $\alp_i$, the quasi-optimality criterion in the linear functional strategy gives us
$\alp_i\kl{ y^{\dlt} } = \alp_{ \vk_i } \kl{ y^{\dlt} } $ such that
\begin{equation}\label{n5:2}
   \abs{ \inner{  x, x_{\alp_i}^{\dlt}  } - \inner{  x_{ \alp_i\kl{  y^{\dlt}  }  }^{\dlt}  , x_{\alp_i}^{\dlt}  }  }   =
  o\kl{\:   \vph\kl{  \vph\kl{  \tht^{-1} (\dlt)  }  }     \:},
\end{equation}
where $\vk_i$ is an index function for which
$x_{\alp_i}^{\dlt} \in \Ran\kl{   \vk_i\kl{   T^* T   }   }   $.


Consider now $$ \pv_{ y^{\dlt} }  = \kl{   \inner{    x_{ \alp_i\kl{  y^{\dlt}  }  }^{\dlt}   , 
x_{\alp_i}^{\dlt}     },\;   i=1,2,\ldots,s    } , \quad 
  \cv_{  y^{\dlt}  }^s = \kl{  c_{  1,y^{\dlt}  }^s,  c_{  2,y^{\dlt}  }^s, \ldots,  c_{  s,y^{\dlt}  }^s     }  
  = \GGm^{-1}  \pv_{ y^{\dlt} }   $$ and
\begin{equation}\label{n5:3}
  x_{ \tm{agg}, y^{\dlt} }^{s} = \sum\limits_{j=1}^{s}  c_{  j,y^{\dlt}  }^s  x_{\alp_j}^{\dlt}.
\end{equation}
Note that $  x_{ \tm{agg}, y^{\dlt} }^{s}  $ can be effectively computed because it only uses access to $T$ and $y^{\dlt}$.
Then by the same arguments as in the proof of Theorem~3.7 in~\cite{ChePerXu15}, it follows from~\eqref{n5:2} that
\begin{equation}\label{n5:4}
\begin{aligned}
   \norm{  x - x_{ \tm{agg}, y^{\dlt} }^{s}  } & =
	 \min\limits_{c_j} \norm{  x - \sum_{j=1}^s  c_j x_{\alp_j}^{\dlt}  }
	 +  o\kl{\:   \vph\kl{  \vph\kl{  \tht^{-1} (\dlt)  }  }     \:}
\\
 &= \norm{  x - x_{ \tm{agg} }^{s}  }  +  o\kl{\:   \vph\kl{  \vph\kl{  \tht^{-1} (\dlt)  }  }     \:}.
\end{aligned}
\end{equation}
%


If
\begin{equation}\label{n5:5}
  \alp\kl{  y^{\dlt}  } \in \Set{    \alp_j,\; j=1,2,\ldots,s    },
\end{equation}
then the accuracy of $x_{ \tm{agg} }^{s}$ may only be better than the one of $x_{ \alp\kl{  y^{\dlt}  }  }^{\dlt}$.
Moreover, from~\eqref{n5:1}, \eqref{n5:4}, it follows that the error of the effectively computed aggregator
$x_{ \tm{agg}, y^{\dlt} }^{s}$ differs from the error of $x_{ \tm{agg} }^{s}$ by a quantity of higher order
than the accuracy guaranteed by the standard quasi-optimality criterion. In this way, a combination
of the linear functional strategy and the quasi-optimality criterion resulting in~\eqref{n5:3} may improve
the accuracy of the latter one. Such improvement indeed is observed in the numerical illustrations below.

Note that the family of the regularized approximations $\Set{  x_{\alp_j}^{\dlt}  }$ may consist only of a single
approximant $  x_{\alp_i}^{\dlt}  $. Then the value of
$$
    c_i^* = \amin\limits_{c} \norm{  x - c x_{\alp_i}^{\dlt}  }
$$
can be explicitly written as
$$
    c_i^* = \frac{    \inner{   x,  x_{\alp_i}^{\dlt}    }    }
    {  \norm{ x_{\alp_i}^{\dlt}  }^2   },
$$
and can be interpreted as a correction factor for $  x_{\alp_i}^{\dlt}  $. If a value $\alp = \alp\kl{   y^{\dlt}   }$ has been already
selected by the quasi-optimality criterion, then $c_i^*$ can be approximated by
\begin{equation}\label{n5:6}
    c_{i,y^{\dlt}  } = \frac{    \inner{   x_{ \alp\kl{  y^{\dlt}  }  }^{\dlt},  x_{\alp_i}^{\dlt}    }    }
    {  \norm{ x_{\alp_i}^{\dlt}  }^2   },
\end{equation}
and under the conditions of Theorem~\ref{thm:main}, we have
$$
   \abs{ c_i^* -  c_{i,y^{\dlt}  }   } =
   o\kl{\:   \vph\kl{  \vph\kl{  \tht^{-1} (\dlt)  }  }     \:}.
$$
%


After calculating~\eqref{n5:6} for each considered $\alp_i$, we can construct a corrected family of regularized approximants
$\Set{    \bar{x}_{\alp_i}^{\dlt}  =   c_{i,  y^{\dlt} }    x_{\alp_i}^{\dlt}     }  $
such that
$$
     \norm{    x    -   \bar{x}_{\alp_i}^{\dlt}   } = \min\limits_{c}
     \norm{    x    -     c {x}_{\alp_i}^{\dlt}   } + o\kl{\:   \vph\kl{  \vph\kl{  \tht^{-1} (\dlt)  }  }     \:}.
$$
If~\eqref{n5:5} is satisfied, then by the same reason as above, the corrected family
$   \Set{   \bar{x}_{\alp_i}^{\dlt}   }   $   may contain elements approximating $x$ better than
$x_{ \alp\kl{  y^{\dlt}  }  }^{\dlt}$ that suggests second or iterated application of the
quasi-optimality criterion, this time to the corrected family $\Set{   \bar{x}_{\alp_i}^{\dlt}    }$.
This iterated quasi-optimality criterion will also be illustrated below.

Recall that the usual way (see~\cite{TikGla65}) of implementing the quasi-optimality criterion consists in
selecting $\alp = \alp\kl{  y^{\dlt}  } = \alp_{\ell}  $ from a geometric sequence
\begin{equation}\label{n5:7}
  \Set{  \alp_j = \alp_1 q^{ j-1 },\; j=1,2,\ldots,M  },\;
	0< \alp_1,\; q<1,
\end{equation}
such that
\begin{equation}\label{n5:8}
  \norm{  x_{\alp_{\ell}}^{\dlt} - x_{\alp_{\ell-1}}^{\dlt}  } =
	\min\skl{  \norm{  x_{\alp_{j}}^{\dlt} - x_{\alp_{j-1}}^{\dlt}  },\;   j=2,3,\ldots,M   } .
\end{equation}
In the same spirit, we can implement the above mentioned iterated quasi-optimality criterion suggesting
$\alp = \bar{\alp}\kl{  y^{\dlt}  } = \alp_{ k }  $ such that
\begin{equation}\label{n5:9}
\begin{split}
  \norm{  \bar{x}_{\alp_{k}}^{\dlt} - \bar{x}_{\alp_{k-1}}^{\dlt}  } &=
	\min \Bigl\{  \norm{  \bar{x}_{\alp_{j}}^{\dlt} - \bar{x}_{\alp_{j-1}}^{\dlt}  }  \\
	&=\norm{  c_{j,y^{\dlt}} x_{\alp_{j}}^{\dlt} - c_{j-1,y^{\dlt}} x_{\alp_{j-1}}^{\dlt}  },
	\;   j=2,3,\ldots,M   \Bigr\}.
\end{split}
\end{equation}

Note that the rule~\eqref{n5:8} is in fact a discretization of the quasi-optimality criterion considered above
because $\psi\kl{  \alp, y^{\dlt}  }$ can be written (see, e.g., \cite{Leo91}) as
$$
  \psi\kl{  \alp, y^{\dlt}  } = \alp
	\norm{ \frac{ \partial x_{\alp}^{\dlt} }{ \partial \alp }  },
$$
and~\eqref{n5:8} is just a backward difference approximation of the derivative
$\frac{ \partial x_{\alp}^{\dlt} }{ \partial \alp }$ on the mesh nodes~\eqref{n5:7}, i.e.
\begin{equation}\label{n5:10}
   \left. \alp   \frac{ \partial x_{\alp}^{\dlt} }{ \partial \alp }   \right|_{   \alp = \alp_j  } \approx
   \alp_{j} \frac{ x_{\alp_{j}}^{\dlt} - x_{\alp_{j-1}}^{\dlt}  }
	 { \alp_{j} - \alp_{j-1} } = (q-1)^{-1} \kl{ x_{\alp_{j}}^{\dlt} - x_{\alp_{j-1}}^{\dlt}  }.
\end{equation}
From this view point, the iterated quasi-optimality criterion~\eqref{n5:9} can be seen as the use of another
difference formula to approximate $\frac{ \partial x_{\alp}^{\dlt} }{ \partial \alp }$, i.e.
\begin{equation}\label{n5:11}
   \left. \alp   \frac{ \partial x_{\alp}^{\dlt} }{ \partial \alp }   \right|_{   \alp = \alp_j  }   \approx
   \alp_{j} \frac{   c_{j, y^{\dlt} }  x_{\alp_{j}}^{\dlt} -     c_{j-1, y^{\dlt} }  x_{\alp_{j-1}}^{\dlt}  }
	 { \alp_{j} - \alp_{j-1} } = (q-1)^{-1} \kl{ \bar{x}_{\alp_{j}}^{\dlt} - \bar{x}_{\alp_{j-1}}^{\dlt}  }.
\end{equation}
%


The quasi-optimality criterion in the linear functional strategy is associated with the function
$\psi_{\vk}  \kl{   \alp, y^{\dlt}  }  $ that is a particular form of the quantity used in the so-called
weighted quasi-optimality criterion discussed in~\cite{BauRei08} (see Definition~2.5 there).
At the same time, $\psi_{\vk} \kl{   \alp,  y^{\dlt}   } $ is, up to a constant multiplier, the upper
bound for all functions
$$
   U_f \kl{  \alp,  y^{\dlt}  } = \alp\abs{ \inner{  f, \frac{ \partial x_{\alp}^{\dlt} }{ \partial \alp }  } }
$$
with $f\in \Ran \kl{\vk\kl{  T^* T  }}$. Therefore, in view of~\eqref{n5:10}, \eqref{n5:11},
for a given $f$, say $f = x_{\alp_i}^{\dlt}$, it is reasonable to use the following discretized
version of the quasi-optimality criterion in the linear functional strategy: choose
$\alp_i\kl{  y^{\dlt}  } = \alp_{  \kp_i  } $ from~\eqref{n5:7} such that
\begin{equation}\label{n5:12}
   \abs{ \inner{  x_{\alp_i}^{\dlt},  x_{\alp_{\kp_i}}^{\dlt} - x_{\alp_{\kp_i - 1 }}^{\dlt}  } } =
	 \min\skl{
	\abs{ \inner{  x_{\alp_i}^{\dlt},  x_{\alp_{j}}^{\dlt} - x_{\alp_{j - 1 }}^{\dlt}  } },\;
	j=2,3,\ldots,M
	}.
\end{equation}

To illustrate the quasi-optimality criterion in the aggregation~\eqref{n5:3}, \eqref{n5:12}, we simulate the data
by~\eqref{n1}, where $T$ is a matrix
$T =(t_{ij})$, where  $i=1,2,\ldots,m,$  $j=1,2,\ldots,n$ with the non-zero entries
$t_{kk} = a^k$, $0<a<1$, $x$ is a vector
$x = \kl{    x_{j} = j^{-\mu} \eta_j,\;  j=1,2,\ldots,n    }$, and $\eta_j$ are randomly sampled
from the uniform distribution on $[-1,1]$. We take $a=0.5$, $\mu=2$, $n=100$, $m=150$.

Our simulation mimics a severely ill-posed problem because the singular values
$\lm_k^2 = t_{kk}^2 = a^{2k}$ of $T^* T$ decrease exponentially, while the Fourier coefficients $x_j$ of $x$ in the corresponding
basis decrease only polynomially. A reason to consider this case is that, as it can be seen from Theorem~\ref{thm:main},
for severely ill-posed problems, the difference between the estimation of the solution and the functional estimation
is the most noticeable. For example, if $\vph(\lm) = \log^{-\nu} \frac{1}{\lm} $, $\nu>0$, which corresponds to the
severely ill-posed case, then the quasi-optimality criterion can guarantee an accuracy of order
$O\kl{   \log^{-\nu} \log  \frac{1}{\dlt}   }$ for an approximation of $x$, while the value of a bounded linear
functional $\inner{f, x  }$ can be estimated with the use of the quasi-optimality criterion much more accurately,
say with the accuracy of order
$   O\kl{    \dlt^{  2\gm^2  }   \log^{  -\nu \kl{   1+\gm - 2\gm^2   }  }    \frac{1}{\dlt}      }   $   when
$f\in  \Ran \kl{   \kl{  T^* T  }^{\gm} }     $,     $ 0 < \gm < 1/2 $.


Numerical illustrations below demonstrate that in the considered simulation scenario, the 
aggregation~\eqref{n5:3}, \eqref{n5:12},
which is based on the quasi-optimality criterion and the linear functional strategy, improves the accuracy resulting from the
quasi-optimality criterion and performs at the level of the best (but unknown) regularization parameter choice.

To guarantee almost surely that the Muckenhoupt-type condition~\eqref{eq:main_assumption_kappa} on the noise
$\xi$ is satisfied in our test, we simulate $\xi$ as
$\xi = \kl{   \xi_i,\; i=1,2,\ldots,m   }$, $m=150$, where $\xi_i$ are randomly sampled from the uniform distribution
on $   [-1,-\dlt] \cup [\dlt,1]   $, $\dlt > 0$, such that the noise support is separated from $0$ and $\infty$, as it is suggested
in Remark~\ref{rem1} discussed in the previous section.

The random simulations of $\xi$ and $x$ are performed $10$ times, and the noise intensity is chosen as $\dlt = 0.01$.
The regularized approximants $x_{\alp_i}^{\dlt}$ are constructed by the Tikhonov regularization, i.e.
$$
  x_{\alp_i}^{\dlt} = \kl{  \alp_i I + T^* T  }^{-1} T^* y^{\dlt},
$$
where $\alp_i$ are taken from~\eqref{n5:7} with $\alp_1 = 0.1$, $q=0.5$, $M=20$. Moreover, in each simulation,
the quasi-optimal regularization parameters $\alp = \alp\kl{  y^{\dlt}  }$, $\alp = \bar{\alp}\kl{  y^{\dlt}  }$
are chosen according to~\eqref{n5:8}, \eqref{n5:9}.
To guarantee condition~\eqref{n5:5}, we aggregate in~\eqref{n5:3} the regularized approximants
$x_{\alp_i}^{\dlt}$ with $\alp_i \geq \alp\kl{ y^{\dlt} }$. An aggregation on a wider set of approximants
does not improve the accuracy, as it has been observed.


The performance of the regularized approximants is measured in terms of the following quantities:
\begin{align*}
e_{\tm{qo}} &= \norm{ x - x_{ \alp\kl{  y^{\dlt}  }  }^{\dlt} },\quad
e_{\tm{qo,2}} = \norm{ x - \bar{x}_{ \bar{\alp}\kl{  y^{\dlt}  }  }^{\dlt} },
\\
e_{\tm{best}} &= \min\skl{ \norm{ x - x_{\alp_i}^{\dlt} },\;  i=1,2,\ldots,M },
\\
e_{\tm{best,2}} &= \min\skl{ \norm{ x - \bar{x}_{\alp_i}^{\dlt} },\;  i=1,2,\ldots,M },
\\
e_{\tm{agg}} &= \norm{  x - x_{ \tm{agg}, y^{\dlt} }^{s}  },
\end{align*}
where $s = \max\Set{   i:\; \alp_i \geq \alp\kl{ y^{\dlt} }   }$, and $x_{ \tm{agg}, y^{\dlt} }^{s}$ is given
by~\eqref{n5:3}, \eqref{n5:12}.
The mean values of the considered quantities over the performed simulations are given in Table~\ref{T1}.
The table also reports the values observed in a particular simulation displayed in Figure~\ref{F1}.

The presented illustration confirms that for severely ill-posed problems, the aggregation based on the linear functional
strategy is able to perform at the level of the best, but unknown, regularization parameter choice.


%
\begin{table}[t]

\begin{center}
\caption{Performance in terms of errors.
\label{T1}} \vspace{2mm}

\begin{tabular}{ccc}
\hline
error & mean value &  simulation on Figure~\ref{F1}  \\ \hline
$e_{\tm{qo}}$ & 0.079 & 0.076  \\
$e_{\tm{best}}$ & 0.067 & 0.064  \\
$e_{\tm{qo,2}}$ & 0.075 & 0.060  \\
$e_{\tm{best,2}}$ & 0.065 & 0.058  \\
$e_{\tm{agg}}$ & 0.065 & 0.057 
\\ \hline
\end{tabular}
\end{center}

\end{table}
%


%
\begin{figure}
\begin{center}
    \includegraphics[width=10cm]{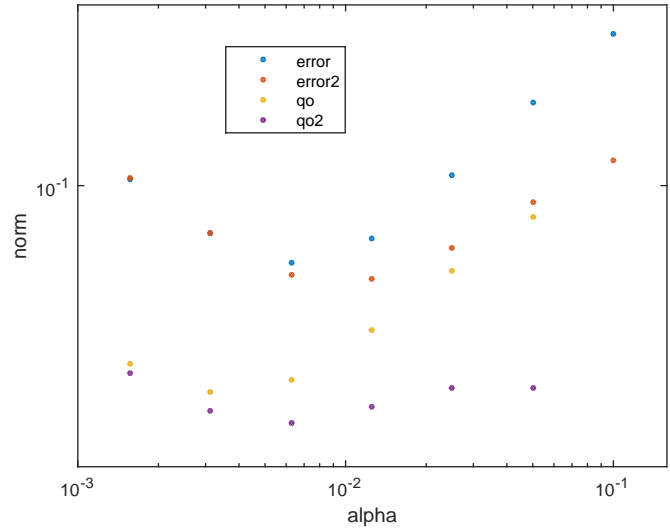}
\end{center}
\caption{
The quantities observed in a particular simulation: $\norm{ x - x_{\alp_i}^{\dlt} }$ (error), $\norm{ x - \bar{x}_{\alp_i}^{\dlt} }$ (error2),
$\norm{ x_{\alp_i}^{\dlt} - x_{\alp_{i-1}  }^{\dlt} }$ (qo),    $\norm{   \bar{x}_{\alp_i}^{\dlt} -     \bar{x}_{\alp_{i-1}  }^{\dlt} }$ (qo2),
plotted against the corresponding values of $\alp_i$, $i=1,2,\ldots,7$. \label{F1}}
\end{figure}
%



\section*{Acknowledgements}
This research was partially supported by AMMODIT project 645672 (Approximation Methods for Molecular Modelling and Diagnosis Tools)
in the frame of Horizon 2020 program.
Sergiy Pereverzyev Jr. gratefully acknowledges the support of the Austrian Science Fund (FWF): project P 29514-N32.
Stefan Kindermann is supported by the Austrian Science Fund (FWF) project
P 30157-N31.


\nocite{*}


\end{document}